\documentclass[12pt,a4paper]{article}
\usepackage[a4paper, top=2.5cm, bottom=2.5cm, left=2.5cm, right=2.5cm]{geometry}
\usepackage{amsmath,amssymb,amsthm,bbm,mathtools,mathrsfs,graphicx,pstricks-add,caption,blkarray,mathdots,enumerate,tikz,tikz-3dplot,enumitem,verbatim,listings,xcolor,cite}

\usepackage{hyperref,authblk}
\usepackage{todonotes}
\usepackage{dsfont,mathabx}
\theoremstyle{plain}
\newtheorem{thm}{Theorem}
\newtheorem*{thm*}{Theorem}
\newtheorem{cor}[thm]{Corollary}
\newtheorem{lem}[thm]{Lemma}
\theoremstyle{definition}
\newtheorem{definition}[thm]{Definition}
\newtheorem{remark}[thm]{Remark}
\newtheorem{ex}[thm]{Example}

\numberwithin{equation}{section}

\DeclareMathOperator{\Pb}{\mathbb{P}}
\DeclareMathOperator{\Ex}{\mathbb{E}}
\newcommand{\disp}{\displaystyle}

\DeclareMathOperator{\I}{\mathbb{I}}

\DeclareMathOperator{\amax}{argmax}
\DeclareMathOperator{\amin}{argmin}
\DeclareMathOperator{\var}{Var}

\definecolor{dgreen}{rgb}{0,0.6,0}
\definecolor{codegray}{gray}{0.9}

\newcommand{\set}[1]{\left\{#1\right\}}

\newcommand{\paren}[1]{\left(#1\right)}
\newcommand{\brac}[1]{\left[#1\right]}

\newcommand{\eps}{\varepsilon}

\usetikzlibrary{calc,patterns}
\usepackage{titlesec}
\titleformat{\chapter}[display]
  {\Huge\bfseries}
  {}
  {0pt}
  {\thechapter.\ }

\titleformat{name=\chapter,numberless}[display]
  {\Huge\bfseries}
  {}
  {0pt}
  {}

\usepackage[normalem]{ulem} 
\allowdisplaybreaks
\newcommand{\edit}[1]{{#1}}

\begin{document}
\title{Tau-Rho Equality and Other Dependence Measures of a Subclass of Factorizable Copulas
}
\author[1]{Noppawit Yanpaisan}
\author[1]{Tippawan Santiwipanont}
\author[2]{Matthias Scherer}
\author[1]{Songkiat Sumetkijakan\footnote{Corresponding author (songkiat.s@chula.ac.th)}}
\affil[1]{\small Chulalongkorn University, Bangkok 10330, Thailand}
\affil[2]{\small Technical University of Munich, Garching--Hochbr\"{u}ck 85748, Germany}

\date{}

\maketitle
\begin{abstract}
Kendall's tau and Spearman's rho, two widely used dependence measures in statistics and risk management, are often treated as interchangeable, yet can disagree sharply: Schreyer et al.~(2017) established the exact region of attainable $(\tau,\rho)$ pairs. 
We study the complementary question of equality, namely, identifying nontrivial families of copulas $C$ satisfying $\tau_C=\rho_C$.
We prove that this equality holds for every factorizable copula of the form $C_{e,\alpha}\ast C_{\beta,e}$, where $\alpha$ and $\beta$ are piecewise linear monotonic surjections (PLMS).
For these PLMS-generated copulas, we study further dependence measures, including Chatterjee's rank correlation coefficient and tail dependence coefficients, revealing some useful algebraic formulas and unexpected phenomena. 
In particular, Chatterjee's coefficient can exhibit extreme asymmetry.
%
\\[10pt]
\noindent\textbf{Keywords:} Chatterjee's rank correlation; Factorizable copulas; 
Piecewise linear monotonic surjection; Kendall's tau; Spearman's rho;  Tail dependence.
\end{abstract}
\section{Introduction: Copulas with $\tau_C=\rho_C$}
Bivariate dependence measures, particularly Kendall’s tau and Spearman’s rho, have a rich history in probability theory and statistics, see, \edit{e.g.,\ \cite{Daniels50,prin15,Durbin51,tau-rho07,FSW26,KT20,QRM15,cop06,Renyi59,Rus13,Tau&Rho2017}}. The literature on copulas provides  explicit formulas $\tau_C$ and $\rho_C$ for most copula families $C$, see, \edit{e.g.,\ \cite{FSW26,KT20,cop06}}. Recently, a lot of interest was sparked by questions like: “\emph{How different can $\tau_C$ and $\rho_C$ be?}” and “\emph{Which class of copulas maximizes $\vert \tau_C-\rho_C\vert$?},” \edit{see \cite{Daniels50,Durbin51,tau-rho07,KT20,Tau&Rho2017}}.
\par 
Reflecting on the different principles behind Kendall’s $\tau$\footnote{Classically defined via the probability for concordant and discordant pairs.} and Spearman’s $\rho$\footnote{Stemming from Pearson’s correlation after transforming the marginal laws to uniform $\mathcal{U}[0,1]$.} and recalling the analytical formulas for computing these concordance measures, see, \edit{e.g.,\ \cite{tau-rho07,FSW26,cop06,Tau&Rho2017}}, we argue that one should actually not be surprised to find $\tau_C\neq \rho_C$ for most $C$. Motivated by this observation, our investigation begins with the complementary question to the one above:
\emph{Are there non-trivial copulas satisfying $\tau_C=\rho_C$?}
\par
Besides the obvious candidates independence $\Pi(u,v)=u\, v$, comonotonicity $M(u,v)=\min\set{u,v}$, and countermonotonicity $W(u,v)=\max\set{u+v-1,0}$,
where both dependence measures are $0$, $1$, and $-1$, respectively, we found it surprisingly difficult to identify further candidates. 
Having in mind that various optimization problems involving copulas are solved by copulas supported on a set of Lebesgue-measure zero \cite{2D-1D11,dist15,RiskAgg2026}, we considered the family of implicit dependence copulas as a promising candidate and were indeed able to identify a \edit{subfamily consisting of special factorizable copulas} that 
 satisfy $\tau_C=\rho_C$. 
\par 
This paper is organized as follows: Firstly, necessary definitions and tools for our investigation are collected in Section~\ref{background}.
In Section~\ref{sec:PLMS}, we investigate in quite some detail properties of the family of copulas constructed from piecewise linear monotonic surjections (PLMS). Most notably, for members of this family we find $\tau_C=\rho_C$. We extend our journey to factorizable and implicit dependence copulas, respectively. Subsequently, more results related to PLMS functions and their corresponding copulas are given in Section~\ref{sec:PLMS-prop}, which includes Chatterjee's rank correlation and tail dependence. For the former, which is asymmetric by nature, we showed that a copula can be constructed such that for $(U,V)\sim C$ one has $\xi(U,V)=1$, while $\xi(V,U)<\eps$ for arbitrarily small $\eps>0$; which is somewhat counterintuitive.

\section{Preliminaries and notation}\label{background}
First of all, we denote the unit interval by $\I:=[0,1]$, use $\lambda$ as the Lebesgue measure on $\I$, let $\mathscr{B}$ be the Borel $\sigma$-algebra on $\I$, $\mathscr{C}$ the class of two-dimensional copulas, and $\mathscr{T}$ the class of measure-preserving transformations on $\I$. In other words, $\alpha\in\mathscr{T}$ if $\lambda\paren{\alpha^{-1}(B)}=\lambda(B)$ for any $B\in\mathscr{B}$, see \cite{reapro04}.
Note that every copula $C\in\mathscr{C}$ can be represented as $C\paren{x,y}=C_{\alpha,\beta}\paren{x,y}:=\lambda\paren{\alpha^{-1}[0,x]\cap\beta^{-1}[0,y]}$ for some $\alpha,\beta\in\mathscr{T}$, but the uniqueness of the pair $\paren{\alpha,\beta}$ does not hold, see \cite{mea11,Vit1996}.
\par 
Algebraically, the class $\mathscr{C}$ is not a vector space but a convex space. A well-known binary operation on $\mathscr{C}$ is the (usual) \emph{Markov product} $\ast$, which is defined for $C,D\in\mathscr{C}$ as $C\ast D(x,y):=\int_0^1\partial_2C\paren{x,t}\partial_1D\paren{t,y}dt$, see \cite{prin15}. 
Recall that $\partial_i$ denotes the partial derivative with respect to the $i^{\text{th}}$ variable.
In addition, $\paren{\mathscr{C},\ast}$ is a monoid with the identity $M\colon(x,y)\mapsto\min\set{x,y}$, the null element $\Pi\colon(x,y)\mapsto xy$, and one-sided invertible elements of the form $C_{e,\alpha}$ or $C_{\alpha,e}$ for some $\alpha\in\mathscr{T}$; $e$ being the identity on $\I$.
 
\par 
The Markov product can be generalized to $C\ast_{\mathcal{A}}D$, where $\mathcal{A}=\paren{A_t}_{t\in\I}$ is a parametric class of copulas and each $A_t$ may be called a \emph{joining copula}, by changing the product of partial derivatives into $A_t\paren{\partial_2C\paren{x,t},\partial_1D\paren{t,y}}$, see \cite{product21,rem07,prin15}.
Depending on the integrand, $C\ast_{\mathcal{A}}D$ is called a \emph{generalized Markov product} of $C$ and $D$ whenever it is well-defined.
\footnote{The generalized Markov product is found in the compatibility problem between three random variables with two given bivariate marginals \cite{boundstrivariate08,rem07}.
    In other words, if $X,Y,Z$ are random variables with $\paren{X,Y}\sim C$ and $\paren{Y,Z}\sim D$, then $\paren{X,Z}\sim C\ast_{\mathcal{A}}D$ for some $\mathcal{A}\subseteq\mathscr{C}$.}
\par 
\emph{Dependence measures} can support different tasks, e.g.\ fitting the parameters of a copula, running hypothesis tests for independence, and obviously measuring type and strength of dependence. Many dependence measures, among others Kendall's tau and Spearman's rho, fall under the umbrella of so-called \emph{concordance measures}. \par 
In terms of a statistical interpretation, suppose that $S:=\set{\paren{x_1,y_1},\dots,\paren{x_n,y_n}}$ is a random sample of $n$ observations from a continuous random vector $\paren{X,Y}$, where $X$ and $Y$ are distributed according to the distribution functions $F$ and $G$, respectively.
The empirical version of \emph{Kendall's tau} of the sample $S$ is then defined by $(c-d)/(c+d)$, where $$c:=\left|\set{\paren{i,j}:(x_i-x_j)(y_i-y_j)>0}\right|\text{ and }d:=\left|\set{\paren{i,j}:(x_i-x_j)(y_i-y_j)<0}\right|$$ are the numbers of \emph{concordance} and \emph{discordance} pairs, respectively.
In addition, define a transformed random sample by $S':=\set{\paren{F(x_1),G(y_1)},\dots,\paren{F(x_n),G(y_n)}}$.
Then \emph{Spearman's rho} for the random sample $S$ is defined via the best linear approximation to $S'$ in least squares sense, i.e., the sample correlation among the transformed observations, see \cite{cop06,Wasserman04}.\par 
In terms of a probabilistic definition, if a copula of the random vector $\paren{X,Y}$ is $C$, then Kendall's tau and Spearman's rho of $\paren{X,Y}$ can be written in terms of $C$:
\begin{align}
\tau_{X,Y}:=\tau_C&=4\iint_{\I^2}C\paren{x,y}dC\paren{x,y}-1=1-4\iint_{\I^2}\frac{\partial C}{\partial x}\frac{\partial C}{\partial y}dxdy\quad\text{and}\label{tau-def}\\
\rho_{X,Y}:=\rho_C&=12\iint_{\I^2}C\paren{x,y}dxdy-3.\label{rho-def}
\end{align}
Relations between $\tau_C$ and $\rho_C$, as well as the region of points $\paren{\tau_C,\rho_C}$ for $C\in\mathscr{C}$, are investigated in the literature, e.g., \cite{cop06,Tau&Rho2017}. 
\par
Apart from concordance measures, there exist other measures of dependence which are suitable to detect `how far from independence' the relationship between the given random variables is. One of these quantities is introduced by Chatterjee \cite{Chat21}, namely the \emph{Chatterjee's rank correlation}. For the empirical version, let $S$ be the same setup as previously mentioned. Instead of searching the best linear model for the corresponding transformed sample $S'$, as done with Spearman's rho, the \emph{statistical rank} is used to formulate the correlation by rearranging $S$ to $S'':=\set{\paren{x_{(1)},y_{(1)}},\dots,\paren{x_{(n)},y_{(n)}}}$ where $x_{(1)}\le x_{(2)}\le\dots\le x_{(n)}$, and denoting $r_i$ as the number of $j$'s such that $y_{(j)}\le y_{(i)}$. Then, it is shown in \cite{Chat21} that $\xi_n\paren{X,Y}$ is a consistent estimator of an asymmetric measure of dependence $\xi\paren{X,Y}$, with
$$\xi_n\paren{X,Y}:=1-\frac{3\sum_{i=1}^{n-1}\left|r_{i+1}-r_i\right|}{n^2-1}\quad\text{and}\quad\xi(X,Y)=\dfrac{\int\var(\Ex[\mathbbm{1}_{Y\ge t}\mid X])dG(t)}{\int\var(\mathbbm{1}_{Y\ge t})dG(t)},$$
the latter being the probabilistic counterpart introduced in \cite{copbase13}. The formula of $\xi_n\paren{X,Y}$ can be modified if some $x_{(i)}$'s or $y_{(i)}$'s tie.
Chatterjee's rank correlation plays an important role as a test statistic for independence testing.
It is very efficient in terms of computational cost, but it seems to have a higher Type II error rate than competitors, especially when the data is smooth and non-oscillating, see \cite{Chat21}.
Moreover, $\xi\paren{X,Y}$ can actually be written in the copula-based form $\xi_C$ \cite{copbase13,Renyi59}, where $C$ is the copula of $\paren{X,Y}$, as 
\begin{equation}\label{xi-copbase}
\xi_C=6\iint_{\I^2}\paren{\partial_1C\paren{x,y}}^2dxdy-2.  
\end{equation}\par
Consider again the vector $\paren{X,Y}$ with $X\sim F$ and $Y\sim G$ and copula $C$. When joint tail events are relevant, some determining quantities are the \emph{tail dependence coefficients}. With $H^{\leftarrow}$ denoting the \emph{quantile function} of a distribution function $H$, the \emph{lower} and \emph{upper tail dependence coefficients} of $\paren{X,Y}$ are defined by
$$\lambda_{X,Y}^L:=\lim\limits_{t\searrow 0}\Pb\paren{Y\le G^{\leftarrow}(t)\mid X\le F^{\leftarrow}(t)}\quad\text{and}\quad \lambda_{X,Y}^U:=\lim\limits_{t\nearrow 1}\Pb\paren{Y>G^{\leftarrow}(t)\mid X>F^{\leftarrow}(t)},$$
respectively \cite{cop06},
which can be expressed in terms of the corresponding copula $C$ by
\begin{equation}\label{tail-dep}
\lambda_C^L=\lim\limits_{t\searrow 0}\frac{C\paren{t,t}}{t}\quad\text{and}\quad\lambda_C^U=\lim\limits_{t\nearrow 1}\frac{1-2t+C\paren{t,t}}{1-t}.
\end{equation}

\section{Copulas from piecewise linear monotonic surjections (PLMS)}\label{sec:PLMS}
While the statistical community typically relies on copulas having a classical density, enabling methods such as maximum-likelihood estimation, copulas with singular components or the ones supported only on a set of Lebesgue measure zero --~the latter, as a stronger property, implying the former, see \cite{note-sing-13}~-- play an important role in many fields of probability. Often, they solve optimization problems over the Fr\'echet class or provide the solution to some mass-transportation problem. Moreover, they often serve as counterexamples or provide cases that require specific treatment. Our question of investigation also falls into this category.   
\subsection{Complete dependence copulas}
Among the simplest copulas supported on a set of Lebesgue measure zero are the ones constructed from the graph of a piecewise linear measure-preserving transformation. If each line gives a surjective affine map, the resulting copula will have interesting properties with respect to its concordance measures, as shown in the following example.
\begin{ex}\label{sec3-ex1}
        Let $\theta\in(0,1)$ and divide $[0,1]$ into the subintervals $[0,\theta)$ and $[\theta,1]$.
        There are four possible choices of copulas concentrated on two line segments, such that the image of each open subinterval is $\paren{0,1}$, as shown in the first row of Figure~\ref{ex1}.
        The concordance measures  Kendall's tau and Spearman's rho are computed according to their probabilistic definitions, and their values are stated in the second row of Figure~\ref{ex1}; we observe that $\tau_C=\rho_C$ for every $C$ and thus only report $\tau_C$.  Furthermore, $\tau_{C_{\theta}^{\oplus,\oplus}}=-\tau_{C_{\theta}^{\ominus,\ominus}}$ and $\tau_{C_{\theta}^{\oplus,\ominus}}=-\tau_{C_{\theta}^{\ominus,\oplus}}$.
        The general formulas of Kendall's tau and Spearman's rho of $C_{e,\alpha}$ with $\alpha\in\mathscr{T}_{\text{PLMS}}$ are given in Theorem~\ref{tau-rho-1}.
    \begin{figure}
        \centering
        \begin{tikzpicture}[scale=1.5]
            \draw ($(0,0)+(0,2)$) rectangle ($(1,1)+(0,2)$);
            \draw[dashed] ($(0.4,1)+(0,2)$)--($(0.4,0)+(0,2)$) node[below]{$\theta$};
            \draw[very thick]($(0,0)+(0,2)$)--($(0.4,1)+(0,2)$);
            \draw[very thick]($(0.4,0)+(0,2)$)--($(1,1)+(0,2)$);
            \node[above] at ($(0.5,1)+(0,2)$){$C_{\theta}^{\oplus,\oplus}$};
            \draw ($(0,0)+(2,2)$) rectangle ($(1,1)+(2,2)$);
            \draw[dashed] ($(0.4,1)+(2,2)$)--($(0.4,0)+(2,2)$) node[below]{$\theta$};
            \draw[very thick]($(0,0)+(2,2)$)--($(0.4,1)+(2,2)$)--($(1,0)+(2,2)$);
            \node[above] at ($(0.5,1)+(2,2)$){$C_{\theta}^{\oplus,\ominus}$};
            \draw ($(0,0)+(4,2)$) rectangle ($(1,1)+(4,2)$);
            \draw[dashed] ($(0.4,1)+(4,2)$)--($(0.4,0)+(4,2)$) node[below]{$\theta$};
            \draw[very thick]($(0,1)+(4,2)$)--($(0.4,0)+(4,2)$)--($(1,1)+(4,2)$);
            \node[above] at ($(0.5,1)+(4,2)$){$C_{\theta}^{\ominus,\oplus}$};
            \draw ($(0,0)+(6,2)$) rectangle ($(1,1)+(6,2)$);
            \draw[dashed] ($(0.4,1)+(6,2)$)--($(0.4,0)+(6,2)$) node[below]{$\theta$};
            \draw[very thick]($(0,1)+(6,2)$)--($(0.4,0)+(6,2)$);
            \draw[very thick]($(0.4,1)+(6,2)$)--($(1,0)+(6,2)$);
            \node[above] at ($(0.5,1)+(6,2)$){$C_{\theta}^{\ominus,\ominus}$};
            \node[left] at (0,0){\footnotesize$0$};
            \draw[dashed](0,1)--(1,1)--(1,0) node[below] {\footnotesize$1$};
            \draw (0,1)--(0,0)--(1,0) node[right] {\footnotesize$\theta$};
            \node[left] at (0,1){\footnotesize$1$};
            \draw[dashed](0,0.5)--(0.5,0.5)--(0.5,0) node[below] {\footnotesize$\frac{1}{2}$};
            \node[left] at (0,0.5){\footnotesize$\frac{1}{2}$};
            \node[above] at (0.5,1){\footnotesize$\tau_{C_{\theta}^{\oplus,\oplus}}=1-2\theta(1-\theta)$};
            \draw[very thick,domain=0:1,smooth] plot(\x,{1-2*\x*(1-\x)});
            \node[left] at ($(0,0)+(2,0)$){\footnotesize$0$};
            \node[below] at ($(1,0)+(2,0)$){\footnotesize$1$};
            \draw[dashed]($(0,1)+(2,0)$)--($(1,1)+(2,0)$)--($(1,-1)+(2,0)$)--($(0,-1)+(2,0)$);
            \draw ($(0,0)+(2,0)$)--($(1,0)+(2,0)$) node[right] {\footnotesize$\theta$};
            \draw ($(0,1)+(2,0)$)--($(0,-1)+(2,0)$) node[left]{\footnotesize$-1$};
            \node[left] at ($(0,1)+(2,0)$){\footnotesize$1$};
            \node[below] at ($(0.5,0)+(2,0)$){\footnotesize$\frac{1}{2}$};
            \node[above] at ($(0.5,1)+(2,0)$){\footnotesize$\tau_{C_{\theta}^{\oplus,\ominus}}=2\theta-1$};
            \draw[very thick,domain=0:1,smooth] plot({\x+2},{2*\x-1});
            \node[left] at ($(0,0)+(4,0)$){\footnotesize$0$};
            \node[below] at ($(1,0)+(4,0)$){\footnotesize$1$};
            \draw[dashed]($(0,1)+(4,0)$)--($(1,1)+(4,0)$)--($(1,-1)+(4,0)$)--($(0,-1)+(4,0)$);
            \draw ($(0,0)+(4,0)$)--($(1,0)+(4,0)$) node[right] {\footnotesize$\theta$};
            \draw ($(0,1)+(4,0)$)--($(0,-1)+(4,0)$) node[left]{\footnotesize$-1$};
            \node[left] at ($(0,1)+(4,0)$){\footnotesize$1$};
            \node[below] at ($(0.5,0)+(4,0)$){\footnotesize$\frac{1}{2}$};
            \node[above] at ($(0.5,1)+(4,0)$){\footnotesize$\tau_{C_{\theta}^{\ominus,\oplus}}=1-2\theta$};
            \draw[very thick,domain=0:1,smooth] plot({\x+4},{1-2*\x});
            \node[left] at ($(0,1)+(6,0)$){\footnotesize$0$};
            \node[left] at ($(0,0)+(6,0)$){\footnotesize$-1$};
            \draw[dashed]($(0,0)+(6,0)$)--($(1,0)+(6,0)$)--($(1,1)+(6,0)$) node[below] {\footnotesize$1$};
            \draw ($(0,0)+(6,0)$)--($(0,1)+(6,0)$)--($(1,1)+(6,0)$) node[right] {\footnotesize$\theta$};
            \node[left] at ($(0,1)+(6,0)$){\footnotesize$0$};
            \draw[dashed]($(0,0.5)+(6,0)$)--($(0.5,0.5)+(6,0)$)--($(0.5,1)+(6,0)$) node[below] {\footnotesize$\frac{1}{2}$};
            \node[left] at ($(0,0.5)+(6,0)$){\footnotesize$-\frac{1}{2}$};
            \node[above] at ($(0.5,1)+(6,0)$){\footnotesize$\tau_{C_{\theta}^{\ominus,\ominus}}=2\theta(1-\theta)-1$};
            \draw[very thick,domain=0:1,smooth] plot({\x+6},{2*\x*(1-\x)-1+1});
        \end{tikzpicture}
        \caption{Support of $C_{e,\alpha}$ and the graph of the resulting Kendall's tau (= Spearman's rho) in terms of $\theta$, where $\alpha\in\mathscr{T}_{\text{PLMS}}$ whose support consists of two line segments.}
        \label{ex1}
    \end{figure}
    \end{ex}
Example~\ref{sec3-ex1} is generalized in the following definition.
\begin{definition}[\!\!\cite{RiskAgg2026}]
    A function $\alpha\in\mathscr{T}$ is called a \emph{piecewise linear monotonic surjection} (PLMS), denoted by $\alpha\in\mathscr{T}_{\text{PLMS}}$, if there is a partition $0=a_0<a_1<\dots<a_{N-1}<a_N=1$, or $[a_i]_{i=0}^{N}$ in short, such that the graph of each $\alpha_i:=\alpha\mid_{\paren{a_{i-1},a_i}}$ is a diagonal of the rectangle $(a_{i-1},a_i)\times(0,1)$.
    More precisely, for any $x\in (a_{i-1},a_i)$:
    $$ \alpha_i(x)=\begin{cases}
        \dfrac{x-a_{i-1}}{a_i-a_{i-1}} & \text{if}\ \alpha_i\ \text{is increasing};\\
        \dfrac{a_i-x}{a_i-a_{i-1}} & \text{if}\ \alpha_i\ \text{is decreasing}.
    \end{cases}$$
\end{definition}
\begin{remark}
    Although there is no restriction on defining $\alpha\in\mathscr{T}_{\text{PLMS}}$ corresponding to the partition $[a_i]_{i=0}^{N}$ at every $x\in[a_i]_{i=0}^{N}$ for well-definedness, whenever $\alpha_k$, for $k=1,2,\dots,N$, is referred individually, we usually suppose the value of $\alpha_k$ at the endpoints for convenience and conciseness as $\alpha_k\paren{a_{k-1}}:=\lim\limits_{x\searrow a_{k-1}}\alpha_k(x)$ and $\alpha_k\paren{a_k}:=\lim\limits_{x\nearrow a_k}\alpha_k(x)$, which are either $0$ or $1$ (cannot be the same values simultaneously). 
\end{remark}
\begin{remark}
    When all $\alpha_k$, $k=1,2,\dots,N$, are increasing, the map $\alpha$ may be called the $[a_i]$-L\"{u}roth map. 
    These functions are used to define L\"{u}roth double shuffles, which induce collections of extreme points on the set of invariant copulas corresponding to the shuffles, see \cite{2-shuffle-Luroth22} for details.
\end{remark}
\begin{figure}
        \centering
        \begin{tikzpicture}[scale=1.75]
            \draw (0,0) rectangle (0.7,1);
            \draw[very thick](0,0)--(0.7,1);
            \node[below] at (0,0) {\footnotesize$a_{i-1}$};
            \node[below] at (0.7,0) {\footnotesize$a_{i}$};
            \node[anchor=south east] at (0,0){\footnotesize 0};
            \node[left] at (0,1){\footnotesize 1};
            \node[left] at (0.4,0.65){\footnotesize $x+a_{i-1}y-a_{i-1}$};
            \node[right] at (0.3,0.25){\footnotesize $a_iy$};
            \node[below] at (0.35,-0.25) {$\alpha_i$ is increasing};
        \end{tikzpicture} \hspace{1em}
        \begin{tikzpicture}[scale=1.75]
            \draw (0,0) rectangle (0.7,1);
            \draw[very thick](0,1)--(0.7,0);
            \node[below] at (0,0) {\footnotesize$a_{i-1}$};
            \node[below] at (0.7,0) {\footnotesize$a_{i}$};
            \node[anchor=south east] at (0,0){\footnotesize 0};
            \node[left] at (0,1){\footnotesize 1};
            \node[left] at (0.35,0.35){\footnotesize $a_{i-1}y$};
            \node[right] at (0.35,0.65){\footnotesize $x+a_iy-a_i$};
            \node[below] at (0.35,-0.25) {$\alpha_i$ is decreasing};
        \end{tikzpicture}
        \caption{Pointwise value of $C_{e,\alpha}$, where (left) $\alpha_i$ is increasing and (right) $\alpha_i$ is decreasing.}
        \label{alpha_i}
    \end{figure}
    Throughout this manuscript, measure-preserving transformations are assumed to be in $\mathscr{T}_{\text{PLMS}}$.
    Since a copula whose support is concentrated on $y=\alpha(x)$ can be written as $C_{e,\alpha}$, the explicit formula of the pointwise value of $C_{e,\alpha}$, for $x\in (a_{i-1},a_i)$, is
    \begin{equation}\label{PLMS-CDC}
        C_{e,\alpha}(x,y)=\begin{cases}
        \min\set{x+a_{i-1}y-a_{i-1},a_iy} & \text{if}\ \alpha_i\ \text{is increasing};\\
        \max\set{x+a_iy-a_i,a_{i-1}y} & \text{if}\ \alpha_i\ \text{is decreasing},
    \end{cases}
    \end{equation}
    as shown in Figure \ref{alpha_i}.
    In particular, $C_{e,\alpha}(a_i,y)=a_iy$ for every $i$.\par 
Observe that Example~\ref{sec3-ex1} suggests a collection of non-obvious candidates of copulas $C$ for which $\tau_C=\rho_C$. We shall see that this is a particular case of PLMS complete dependence copulas, where the result is true in general. Furthermore, the formula of $\tau_C$ (or $\rho_C$) can be expressed explicitly as follows.
\begin{thm}[$\tau=\rho$ for PLMS]\label{tau-rho-1}
    Let $\alpha\in\mathscr{T}_{\text{PLMS}}$ with a partition $[a_i]_{i=0}^{N}$. 
    Then
    \[\tau_{C_{e,\alpha}}=\rho_{C_{e,\alpha}}=2\sum_{i=1}^{N}w_i(a_i-a_{i-1})-1,\quad\text{where}\ w_i:=\underset{a_{i-1}\le x\le a_i}{\amax}\alpha_i(x).\]
\end{thm}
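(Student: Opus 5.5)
Write $\ell_i:=a_i-a_{i-1}$. Here $w_i$ is the point of $[a_{i-1},a_i]$ where $\alpha_i$ is maximal: $w_i=a_i$ if $\alpha_i$ is increasing and $w_i=a_{i-1}$ if it is decreasing. The plan is to compute $\tau_{C_{e,\alpha}}$ and $\rho_{C_{e,\alpha}}$ separately, strip by strip over the partition, and check that both reduce to $2\sum_i w_i\ell_i-1$.

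\emph{Spearman's rho.} I would use \eqref{rho-def} with the explicit formula \eqref{PLMS-CDC}. It suffices to compute $\int_0^1\int_{a_{i-1}}^{a_i}C_{e,\alpha}(x,y)\,dx\,dy$ on each strip. Fix $y$ and suppose $\alpha_i$ is increasing. Then $C_{e,\alpha}(x,y)=x-a_{i-1}(1-y)$ for $x\le a_{i-1}+\ell_i y$, and $C_{e,\alpha}(x,y)=a_iy$ afterwards. The $x$-integral is elementary:
\[
\tfrac12\ell_i^2y^2+a_{i-1}\ell_i y^2+a_iy\ell_i(1-y).
\]
Integrating over $y$ gives $\ell_i(\ell_i/6+a_{i-1}/3+a_i/2-a_i/3)$, which simplifies to $\ell_i\bigl(\tfrac{a_{i-1}+a_i}{4}-\tfrac{\ell_i}{12}\bigr)+\tfrac{\ell_i^2}{6}$. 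The decreasing case is handled the same way. The cleanest way to organize both cases is to write the strip integral as the baseline $\int\!\int xy$-type term $\ell_i\frac{a_{i-1}+a_i}{4}$, coming from the values $a_{i-1}y$ and $a_iy$ at the endpoints, plus a correction of $\pm\ell_i^2/12$. The sign is $+$ in the increasing case and $-$ in the decreasing case. Since $\sum_i\ell_i(a_{i-1}+a_i)=1$, summing and applying \eqref{rho-def} gives
\[
\rho=\sum_i\epsilon_i\ell_i^2,\qquad \epsilon_i=\pm1 .
\]
Finally I would check that this equals $2\sum w_i\ell_i-1$, using $1=\sum\ell_i(a_{i-1}+a_i)$. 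Indeed $2w_i\ell_i-\ell_i(a_{i-1}+a_i)$ equals $\ell_i^2$ if $w_i=a_i$ and $-\ell_i^2$ if $w_i=a_{i-1}$.

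\emph{Kendall's tau.} I would use the second form in \eqref{tau-def}, $\tau=1-4\iint\partial_1C\,\partial_2C$. On the strip $(a_{i-1},a_i)$ with $\alpha_i$ increasing, $\partial_1C=\mathbbm 1\{y>\alpha_i(x)\}$. The derivative $\partial_2C$ equals $a_{i-1}$ on the region $y>\alpha_i(x)$, since there $C=x-a_{i-1}(1-y)$, and equals $a_i$ below. The integrand is therefore $a_{i-1}$ on the triangle above the graph, which has area $\ell_i/2$. The strip contributes $a_{i-1}\ell_i/2$, that is $\bigl(a_i+a_{i-1}-2w_i\bigr)\ell_i/2+\dots$; more directly, in general the contribution is $(a_{i-1}+a_i-w_i)\ell_i/2$. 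In the decreasing case, $\partial_1C=\mathbbm 1\{y>\alpha_i(x)\}$ still holds, the integrand takes the value $a_i$ on the upper triangle, and the contribution is $a_i\ell_i/2$. In both cases the contribution is $(a_{i-1}+a_i-w_i)\ell_i/2$. Then
\[
\tau=1-2\sum_i\ell_i(a_{i-1}+a_i)+2\sum_iw_i\ell_i=2\sum_iw_i\ell_i-1 .
\]

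The main obstacle is bookkeeping: keeping the cases $\alpha_i$ increasing and decreasing straight, and handling the boundary convention at the partition points, which is a null set and so can be ignored. The integrals themselves are routine. The step I would verify most carefully is the strip integral for $\rho$ in the decreasing case, where the region on which $C=x+a_iy-a_i$ is the triangle to the right of the antidiagonal.
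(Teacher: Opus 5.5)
Your proof is correct, but it takes a different route from the paper. The paper never integrates over the unit square. Since $C_{e,\alpha}$ is the law of $(U,\alpha(U))$, it writes $\tau=4\Ex[C(U,\alpha(U))]-1$ and $\rho=12\Ex[U\alpha(U)]-3$. On each strip it then only needs the value of $C$ on the graph, namely $a_i\alpha_i(u)$ or $a_{i-1}\alpha_i(u)$, together with the one-dimensional integral $\int u\,\alpha_i(u)\,du$. Both reduce immediately to $2w_i\ell_i$, plus $2(a_i^2-a_{i-1}^2)$ in the case of $\rho$.

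You instead use the analytic forms $\rho=12\iint C-3$ and $\tau=1-4\iint\partial_1C\,\partial_2C$, applying the piecewise formula \eqref{PLMS-CDC} on the whole strip. This needs more bookkeeping, but your computations check out. In particular, the decreasing-case strip integral you flagged is
\[
\frac{a_{i-1}\ell_i}{6}+\frac{a_i\ell_i}{3}-\frac{\ell_i^2}{6}=\ell_i\,\frac{a_{i-1}+a_i}{4}-\frac{\ell_i^2}{12}.
\]

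Your route also gains something. The intermediate identity $\rho=\sum_i\pm\ell_i^2$ is exactly $\rho_{C_{e,\alpha}}=\sum_i\ell_i^2\rho_{C_i}$ with $C_i\in\{M,W\}$, which is the special case of Theorem~\ref{thm7:18Dec}. It also makes the $\tau$ part of Corollary~\ref{approx-pi-1} immediate, since $|\tau|\le\sum_i\ell_i^2\le\max_i\ell_i$. Moreover, your triangle-area argument for $\tau$ is the same device the paper uses in the appendix proof of Theorem~\ref{tau-rho-3}. So your approach extends directly to the factorizable case, whereas the paper's conditional-expectation shortcut depends on $V$ being a function of $U$.

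One cosmetic fix: delete the stray fragment ``that is $(a_i+a_{i-1}-2w_i)\ell_i/2+\dots$'' in the $\tau$ part. It is incomplete and does not equal $a_{i-1}\ell_i/2$. The correct general contribution, $(a_{i-1}+a_i-w_i)\ell_i/2$, follows immediately after it.
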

\begin{proof} Before verifying this claim, we introduce $\triangle(a_i):=a_i-a_{i-1}$ for notational simplicity.
    \begin{itemize}
        \item \emph{Kendall's tau}: To derive $\tau_{C_{e,\alpha}}$, note that 
    $$\tau_{C_{e,\alpha}}=4\Ex\big[\Ex[C(U,V)\mid U]\big]-1=4\Ex\big[C(U,\alpha(U))\big]-1=4\int_0^1C(u,\alpha(u))du-1.$$ The integral over each $\paren{a_{i-1},a_i}$ is treated in 2 cases.\\
    \textbf{Case 1:} $\alpha_i$ is increasing.
    Then \eqref{PLMS-CDC} implies $C(u,\alpha(u))=a_i\alpha_i(u)=\frac{w_i(u-a_{i-1})}{\triangle(a_i)}$ and
    \[4\int_{a_{i-1}}^{a_i}C(u,\alpha(u))du=4w_i\int_{a_{i-1}}^{a_i}\frac{u-a_{i-1}}{\triangle(a_i)}du=4w_i\triangle(a_i)\int_0^1z\,dz=2w_i\triangle(a_i). \tag{$z=\alpha_i(u)$}\]
    \textbf{Case 2:} $\alpha_i$ is decreasing.
    Then \eqref{PLMS-CDC} implies $C(u,\alpha(u))=a_{i-1}\alpha_i(u)=w_i\alpha_i(u)$, and in a similar manner, $4\int_{a_{i-1}}^{a_i}C(u,\alpha(u))du=2w_i\triangle(a_i)$.\\
    Thus, $\tau_{C_{e,\alpha}}=2\sum_{i=1}^{N}w_i\triangle(a_i)-1$ as desired.
    \item \emph{Spearman's rho}: In order to find $\rho_{C_{e,\alpha}}$, notice that $$\rho_{C_{e,\alpha}}=12\Ex\big[\Ex[UV\mid U]\big]-3=12\Ex\big[U\alpha(U)\big]-3=12\int_0^1u\cdot\alpha(u)\,du-3,$$
    which can again be considered in 2 cases.\\
    \textbf{Case 1:} $\alpha_i$ is increasing.
    Then 
    \begin{align*}
        12\int_{a_{i-1}}^{a_i}u\cdot\alpha(u)du&=12\triangle(a_i)\int_0^1(a_{i-1}+\triangle(a_i)z)z\,dz=12\triangle(a_i)\left[\frac{a_{i-1}}{2}+\frac{\triangle(a_i)}{3}\right] \tag{$z=\alpha_i(u)$}\\
        &=2\triangle(a_i)\left[2a_i+a_{i-1}\right]=2w_i\triangle(a_i)+2(a_i^2-a_{i-1}^2).
    \end{align*}
    \textbf{Case 2:} $\alpha_i$ is decreasing.
    It is similar to the previous case to show that $$12\int_{a_{i-1}}^{a_i}u\cdot\alpha(u)du=2w_i\triangle(a_i)+2(a_i^2-a_{i-1}^2).$$
    Therefore,
    \[\rho_{C_{e,\alpha}}=2\sum_{i=1}^{N}w_i\triangle(a_i)+2\sum_{i=1}^{N}(a_i^2-a_{i-1}^2)-3=2\sum_{i=1}^{N}w_i\triangle(a_i)-1.\qedhere\]
    
    \end{itemize}
\end{proof}
\subsection{Factorizable copulas}
In the above section, we observed that complete dependence copulas $C$ involving PLMS functions satisfy $\tau_C=\rho_C$. Since $\ast$ is a binary operation on $\mathscr{C}$ with many appealing algebraic properties, it is natural to investigate the case of a product between copulas related to PLMS functions, i.e., the product $C\ast D$ where $C=C_{e,\alpha}$ or $C_{\alpha,e}$; and $D=C_{e,\beta}$ or $C_{\beta,e}$ for some $\alpha,\beta\in\mathscr{T}_{\text{PLMS}}$.
\par 
    Even though copulas of the form $C_{\alpha,e}\ast C_{e,\beta}=C_{\alpha,\beta}$ constitute the most general form of such copulas (see \cite{Vit1996}), there is no transparent interpretation of the product in terms of the graphs of $\alpha$ and $\beta$. So we restrict our interest to the cases $C_{e,\alpha}\ast C_{e,\beta}=C_{e,\beta\circ\alpha}$ and $C_{e,\alpha}\ast_{\mathcal{A}}C_{\beta,e}$. Note that $C_{\alpha,e}\ast C_{\beta,e}=C_{\alpha\circ\beta,e}$ is simply the transpose of $C_{e,\beta}\ast C_{e,\alpha}$.
    \par 
    First, let us consider the case $C=C_{e,\alpha}\ast C_{e,\beta}=C_{e,\beta\circ\alpha}$ when $\alpha,\beta\in\mathscr{T}_{\text{PLMS}}$ with the corresponding partitions $[a_i]_{i=0}^{P}$ and $[b_j]_{j=0}^{Q}$, respectively. 
    It is routine to show that $\beta\circ\alpha\in\mathscr{T}_{\text{PLMS}}$ with the corresponding partition $[c_k]_{k=0}^{PQ}$, where 
\begin{equation}\label{composite-1}
    c_k=\begin{cases}
        a_{i-1}+b_j\paren{a_i-a_{i-1}} & \text{if}\ \alpha_i\ \text{is increasing};\\
        a_i-b_{Q-j}\paren{a_i-a_{i-1}} & \text{if}\ \alpha_i\ \text{is decreasing},
    \end{cases} 
\end{equation}
whenever $k=\paren{i-1}Q+j$, $i\in\set{1,2,\dots,P}$, and $j\in\set{0,1,\dots,Q}$ as shown in Figure~\ref{composite-11}.
\begin{figure}
        \centering
        \begin{tikzpicture}[scale=1.75]
            \draw (0,0) rectangle (0.7,1);
            \draw (0.7,0) rectangle (1.5,1);
            \draw (1.5,0) rectangle (2.1,1);
            \draw (2.1,0) rectangle (2.7,1);
            \draw (2.7,0) rectangle (3.2,1);
            \draw[very thick](0,0)--(0.7,1)--(1.5,0);
            \node at (1.8,0.5) {\footnotesize$\dots$};
            \draw[very thick](2.1,0)--(2.7,1);
            \draw[very thick](2.7,0)--(3.2,1);
            \node[below] at (0,0) {\scriptsize$a_{i-1}$};
            \node[below] at (0.7,0) {\scriptsize\begin{tabular}{c}
                $a_{i-1}+$  \\
                $b_1\triangle$ 
            \end{tabular}};
            \node[below] at (1.5,0) {\scriptsize\begin{tabular}{c}
                $a_{i-1}+$  \\
                $b_2\triangle$ 
            \end{tabular}};
            \node[below] at (2.1,0) {\scriptsize\begin{tabular}{c}
                $a_{i-1}+$  \\
                $b_{Q-2}\triangle$ 
            \end{tabular}};
            \node[below] at (2.7,0) {\scriptsize\begin{tabular}{c}
                $a_{i-1}+$  \\
                $b_{Q-1}\triangle$ 
            \end{tabular}};
            \node[below] at (3.2,0) {\scriptsize$a_i$};
            \node[anchor=south east] at (0,0){\footnotesize 0};
            \node[left] at (0,1){\footnotesize 1};
            \node[below] at (1.5,-0.5) {$\alpha_i$ is increasing};
        \end{tikzpicture} \hspace{1em}
        \begin{tikzpicture}[scale=1.75]
            \draw (0,0) rectangle (0.5,1);
            \draw (0.5,0) rectangle (1.1,1);
            \draw (1.1,0) rectangle (1.7,1);
            \draw (1.7,0) rectangle (2.5,1);
            \draw (2.5,0) rectangle (3.2,1);
            \draw[very thick](0,1)--(0.5,0);
            \draw[very thick](0.5,1)--(1.1,0);
            \node at (1.4,0.5) {\footnotesize$\dots$};
            \draw[very thick](1.7,0)--(2.5,1)--(3.2,0);
            \node[below] at (0,0) {\scriptsize$a_{i-1}$};
            \node[below] at (0.5,0) {\scriptsize\begin{tabular}{c}
                $a_i-$  \\
                $b_{Q-1}\triangle$ 
            \end{tabular}};
            \node[below] at (1.1,0) {\scriptsize\begin{tabular}{c}
                $a_i-$  \\
                $b_{Q-2}\triangle$ 
            \end{tabular}};
            \node[below] at (1.7,0) {\scriptsize\begin{tabular}{c}
                $a_i-$  \\
                $b_2\triangle$ 
            \end{tabular}};
            \node[below] at (2.5,0) {\scriptsize\begin{tabular}{c}
                $a_i-$  \\
                $b_1\triangle$ 
            \end{tabular}};
            \node[below] at (3.2,0) {\scriptsize$a_i$};
            \node[anchor=south east] at (0,0){\footnotesize 0};
            \node[left] at (0,1){\footnotesize 1};
            \node[below] at (1.5,-0.5) {$\alpha_i$ is decreasing};
        \end{tikzpicture}
        \caption{Composition of PLMS functions, where $\triangle:=\triangle(a_i)=a_i-a_{i-1}$.}
        \label{composite-11}
    \end{figure}
This observation immediately implies that $\tau_{C_{e,\alpha}\ast C_{e,\beta}}=\rho_{C_{e,\alpha}\ast C_{e,\beta}}$ by Theorem~\ref{tau-rho-1}. \par What is more surprising is the multiplicative property of these quantities. This implies that  $\tau$ and $\rho$ can be treated as a monoid homomorphism between the class of $C_{e,\alpha}$, where $\alpha\in\mathscr{T}_{\text{PLMS}}$, under the Markov product and the interval $[-1,1]$ under the usual multiplication.
\begin{thm}\label{tau-rho-2}
    For any $\alpha,\beta\in\mathscr{T}_{\text{PLMS}}$: $$\rho_{C_{e,\alpha}\ast C_{e,\beta}}=\tau_{C_{e,\alpha}\ast C_{e,\beta}}=\tau_{C_{e,\alpha}}\cdot \tau_{C_{e,\beta}}.$$
\end{thm}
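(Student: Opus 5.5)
Since $C_{e,\alpha}\ast C_{e,\beta}=C_{e,\beta\circ\alpha}$ and $\beta\circ\alpha\in\mathscr{T}_{\text{PLMS}}$ with the partition $[c_k]_{k=0}^{PQ}$ given in \eqref{composite-1}, the first equality $\rho=\tau$ follows at once from Theorem~\ref{tau-rho-1}. It therefore remains to prove $\tau_{C_{e,\beta\circ\alpha}}=\tau_{C_{e,\alpha}}\tau_{C_{e,\beta}}$. The formula of Theorem~\ref{tau-rho-1} is more convenient in the form $\tau_{C_{e,\alpha}}=\sum_{i}\sigma_i\,\triangle(a_i)$, where $\sigma_i=+1$ if $\alpha_i$ is increasing and $\sigma_i=-1$ if it is decreasing. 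This holds because $w_i\in\{a_{i-1},a_i\}$: it equals $a_i$ in the increasing case and $a_{i-1}$ in the decreasing case. Hence
\[2\sum_i w_i\triangle(a_i)-1=\sum_i\bigl(2w_i-a_i-a_{i-1}\bigr)\triangle(a_i)+\sum_i(a_i^2-a_{i-1}^2)-1,\]
and $2w_i-a_i-a_{i-1}=\sigma_i\triangle(a_i)$ while the telescoping sum is $1$. So $\tau_{C_{e,\alpha}}=\sum_i\sigma_i\triangle(a_i)^2$. (A sanity check against Example~\ref{sec3-ex1}: $\theta^2+(1-\theta)^2=1-2\theta(1-\theta)$.)

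Next I identify the pieces of $\beta\circ\alpha$. On the $i$-th interval of $\alpha$, the pieces of $\beta\circ\alpha$ are the preimages under $\alpha_i$ of the intervals $(b_{j-1},b_j)$. Their lengths are $\triangle(a_i)\triangle(b_j)$. Their monotonicity signs are $\sigma_i\sigma'_j$, where $\sigma'_j$ is the sign of $\beta_j$, because a composition of affine maps has slope sign equal to the product of the signs. By \eqref{composite-1} (or Figure~\ref{composite-11}), in the decreasing case these pieces simply appear in reversed order, which does not affect the sum. Therefore
\[\tau_{C_{e,\beta\circ\alpha}}=\sum_{i,j}\sigma_i\sigma'_j\,\triangle(a_i)^2\triangle(b_j)^2=\Bigl(\sum_i\sigma_i\triangle(a_i)^2\Bigr)\Bigl(\sum_j\sigma'_j\triangle(b_j)^2\Bigr)=\tau_{C_{e,\alpha}}\tau_{C_{e,\beta}}.\]

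The main obstacle is only bookkeeping: checking from \eqref{composite-1} that the piece lengths are exactly $\triangle(a_i)\triangle(b_j)$, that the signs multiply, and that the reindexing $j\mapsto Q-j+1$ in the decreasing case is a bijection. Once $\tau$ is rewritten as the signed sum of squared lengths, the multiplicativity is immediate.
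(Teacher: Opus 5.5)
Your proof is correct, but it takes a different route from the paper's. Both arguments reduce to $C_{e,\beta\circ\alpha}$ and Theorem~\ref{tau-rho-1}. From there the paper keeps $\tau$ in the argmax form $2\sum_k m^+\paren{(\beta\circ\alpha)_k}\triangle(c_k)-1$ and derives the explicit endpoint formula \eqref{alternate-endpoint} for each piece of $\beta\circ\alpha$, tracking the reindexed $\widehat{\beta_j}$ when $\alpha_i$ decreases. It then expands, uses $m^+(\alpha_i)+m^-(\alpha_i)=a_i+a_{i-1}$ together with telescoping, and only at the end recognizes the product $\bigl[2\sum_i m^+(\alpha_i)\triangle(a_i)-1\bigr]\bigl[2\sum_j m^+(\beta_j)\triangle(b_j)-1\bigr]$. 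You instead first rewrite Theorem~\ref{tau-rho-1} as $\tau_{C_{e,\alpha}}=\sum_i\sigma_i\triangle(a_i)^2$, which depends only on the multiset of (sign, length) pairs. The composition step then reduces to the fact that piece lengths and slope signs multiply, the order reversal in \eqref{composite-1} becomes irrelevant rather than something to track, and the factorization is immediate. Your formula is the case $Q=1$, $D_1=M$ of Theorem~\ref{thm7:18Dec} (with $\tau_M=1$, $\tau_W=-1$). It also gives $|\tau_{C_{e,\alpha}}|\le\sum_i\triangle(a_i)^2\le\text{mesh}(\alpha)$, i.e.\ the $\tau$-part of Corollary~\ref{approx-pi-1} without Riemann sums. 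What the paper's bookkeeping buys is \eqref{alternate-endpoint}, which it reuses in the proof of Theorem~\ref{chat-2}. Your idea covers that case too: since $a_i^3-a_{i-1}^3=\triangle(a_i)^3+3a_ia_{i-1}\triangle(a_i)$, Theorem~\ref{chat-1} says $\xi_{C_{\alpha,e}}=\sum_i\triangle(a_i)^3$, which is multiplicative under composition for the same reason.
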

Now, we turn to the case $C=C_{e,\alpha}\ast C_{\beta,e}$ when $\alpha,\beta\in\mathscr{T}_{\text{PLMS}}$ with the corresponding partitions $[a_i]_{i=0}^{P}$ and $[b_j]_{j=0}^{Q}$.
Note that $C$ has the mass distributed on the graph $\alpha(x)=\beta(y)$ \cite{product21,CPMS22,Sum2017} and is called a \emph{factorizable copulas}. Additionally, the pointwise values of $C$ are found to be: 
    \begin{equation}\label{PLMS-fac}
        C(u,v) = \begin{cases}
            \min\set{b_{j-1}u+a_iv-a_ib_{j-1},b_ju+a_{i-1}v-a_{i-1}b_j} & \text{ }\ \alpha_i,\beta_j\ \text{same monotonicity};\\
            \max\set{b_{j-1}u+a_{i-1}v-a_{i-1}b_{j-1},b_ju+a_iv-a_ib_j} & \text{ }\ \alpha_i,\beta_j\ \text{diff. monotonicities},
        \end{cases}
    \end{equation}
which is illustrated in Figure~\ref{factor-point}.
\begin{figure}
        \centering
        \begin{tikzpicture}[scale=2]
            \draw (0,0) rectangle (1.5,1);
            \node[below] at (0,0){$a_{i-1}$};
            \node[below] at (1.5,0){$a_{i}$};
            \node[anchor=south east] at (0,0){$b_{j-1}$};
            \node[left] at (0,1){$b_j$};
            \draw[very thick](0,0)--(1.5,1);
            \node[left] at (0.75,0.6){\footnotesize$b_jx+a_{i-1}y-a_{i-1}b_j$};
            \node[right] at (0.75,0.4){\footnotesize$b_{j-1}x+a_{i}y-a_ib_{j-1}$};
            \node[below] at (0.75,-0.25){\scriptsize$\alpha_i,\beta_j$ same monotonicity};
        \end{tikzpicture} \hspace{2ex}
        \begin{tikzpicture}[scale=2]
            \draw (0,0) rectangle (1.5,1);
            \node[below] at (0,0){$a_{i-1}$};
            \node[below] at (1.5,0){$a_{i}$};
            \node[anchor=south east] at (0,0){$b_{j-1}$};
            \node[left] at (0,1){$b_j$};
            \draw[very thick](0,1)--(1.5,0);
            \node[right] at (0.75,0.6){\footnotesize$b_jx+a_iy-a_ib_j$};
            \node[left] at (0.75,0.4){\footnotesize$b_{j-1}x+a_{i-1}y-a_{i-1}b_{j-1}$};
            \node[below] at (0.75,-0.25){\scriptsize$\alpha_i,\beta_j$ different monotonicities};
        \end{tikzpicture}
        \caption{Support and pointwise values of $C_{e,\alpha}\ast C_{\beta,e}$ on $R_{ij}$ where (left) $\alpha_i$ and $\beta_j$ have the same monotonicity, and (right) $\alpha_i$ and $\beta_j$ have different monotonicities.}
        \label{factor-point}
    \end{figure}
To investigate the concordance measures of factorizable copulas $C_{e,\alpha}\ast C_{\beta,e}$, where $\alpha,\beta\in\mathscr{T}_{\text{PLMS}}$, we start with  formula \eqref{PLMS-fac} and by algebraic manipulations, we obtain an interesting result.
\begin{thm}[$\tau$ and $\rho$ of product between PLMS]\label{tau-rho-3}
        For any $\alpha,\beta\in\mathscr{T}_{\text{PLMS}}$: $$\tau_{C_{e,\alpha}\ast C_{\beta,e}}=\rho_{C_{e,\alpha}\ast C_{\beta,e}}=\tau_{C_{e,\alpha}}\cdot \tau_{C_{\beta,e}}.$$
    \end{thm}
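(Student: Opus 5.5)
The plan is to compute $\rho$ and $\tau$ of $C:=C_{e,\alpha}\ast C_{\beta,e}$ separately, each by a short probabilistic or geometric argument. Each should reduce to a product of two factors, which Theorem~\ref{tau-rho-1} identifies as $\tau_{C_{e,\alpha}}$ and $\tau_{C_{e,\beta}}$. Since $C_{\beta,e}$ is the transpose of $C_{e,\beta}$, and $\tau,\rho$ are invariant under transposition, $\tau_{C_{\beta,e}}=\tau_{C_{e,\beta}}$. We write $\triangle(a_i)=a_i-a_{i-1}$ and $\triangle(b_j)=b_j-b_{j-1}$ as before.

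\emph{Spearman's rho.} Realize the Markov product probabilistically. Let $Z\sim\mathcal U[0,1]$, and let $U,V$ be conditionally independent given $Z$ with $\alpha(U)=Z=\beta(V)$ and $(U,V)\sim C$. Given $Z=z$, $U$ lies on branch $i$, at the point $\alpha_i^{-1}(z)$, with probability $\triangle(a_i)$. Hence $\Ex[U\mid Z]=\sum_i\triangle(a_i)\alpha_i^{-1}(Z)=p+qZ$ is affine in $Z$. The condition $\Ex U=\tfrac12$ forces $p=\tfrac12-\tfrac q2$.

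Theorem~\ref{tau-rho-1} gives $\tau_{C_{e,\alpha}}=12\,\Ex[U\alpha(U)]-3=12\,\Ex[Z(p+qZ)]-3$. Computing this yields $\tau_{C_{e,\alpha}}=q$. In the same way $\Ex[V\mid Z]=r+sZ$ with $s=\tau_{C_{e,\beta}}$. By conditional independence,
\[\Ex[UV]=\Ex\big[(p+qZ)(r+sZ)\big]=\tfrac14+qs\var(Z)=\tfrac14+\tfrac{qs}{12},\]
so $\rho_C=12\,\Ex[UV]-3=qs$.

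\emph{Kendall's tau.} Use $\tau_C=4\,\Ex[C(U,V)]-1$. The mass of $C$ on each rectangle $R_{ij}=[a_{i-1},a_i]\times[b_{j-1},b_j]$ equals $\triangle(a_i)\triangle(b_j)$. It is spread uniformly along a diagonal of $R_{ij}$: the increasing diagonal if $\alpha_i,\beta_j$ have the same monotonicity, the decreasing one otherwise. By \eqref{PLMS-fac}, on that diagonal the two affine expressions coincide, so $C$ is affine along the segment. Its average therefore equals its value at the centre of $R_{ij}$.

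Substituting the centre into \eqref{PLMS-fac} gives $\tfrac12(a_{i-1}b_{j-1}+a_ib_j)$ in the same-monotonicity case and $\tfrac12(a_{i-1}b_j+a_ib_{j-1})$ otherwise. Set $w_i:=\amax\alpha_i$ as in Theorem~\ref{tau-rho-1}, $w_i'$ the other endpoint, and $\omega_j,\omega_j'$ analogously for $\beta$. Then both cases read uniformly as $\tfrac12(w_i\omega_j+w_i'\omega_j')$. This is a case check over the four monotonicity combinations, and it is the step I expect to need the most care.

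Summing over $i,j$ then factorizes. With $A:=\sum_i\triangle(a_i)w_i$ we have $\sum_i\triangle(a_i)w_i'=\sum_i(a_i^2-a_{i-1}^2)-A=1-A$, and similarly for $B:=\sum_j\triangle(b_j)\omega_j$. Hence
\[\Ex[C(U,V)]=\tfrac12\big(AB+(1-A)(1-B)\big),\]
and so
\[\tau_C=2AB+2(1-A)(1-B)-1=(2A-1)(2B-1).\]
By Theorem~\ref{tau-rho-1}, $2A-1=\tau_{C_{e,\alpha}}$ and $2B-1=\tau_{C_{e,\beta}}=\tau_{C_{\beta,e}}$, which completes the argument.

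The main obstacles are the uniform centre-value identity and justifying the reduction to the segment average. I would handle the latter by parametrizing the segment linearly in $t\in[0,1]$, under which the mass is uniform in $t$.
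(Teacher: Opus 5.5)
Your argument is correct, and it takes a genuinely different route from the paper's.

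\textbf{The paper's route.} The paper works entirely from the pointwise formula \eqref{PLMS-fac}. For $\tau$ it uses $1-4\iint\partial_1C\,\partial_2C$, with the partial derivatives constant on the two triangles into which the support cuts each $R_{ij}$. For $\rho$ it integrates $C$ over each $R_{ij}$ directly. Both computations end in the same double sum $1-2\sum_{i,j}\triangle(a_i)\triangle(b_j)\,[m^-(\alpha_i)m^+(\beta_j)+m^+(\alpha_i)m^-(\beta_j)]$, which is then factorized.

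\textbf{Your route for $\rho$.} You realize the Markov product as conditional independence given $Z$, and this removes all integration. The only input is that $\Ex[U\mid Z]$ is affine in $Z$ with slope $\rho_{C_{e,\alpha}}$; then $\rho_C$ is just the covariance of two affine functions of $Z$. In fact your computation needs linear regression for only one factor. It therefore shows $\rho_{C_{e,\alpha}\ast D}=\rho_{C_{e,\alpha}}\,\rho_D$ for every copula $D$. Together with Theorem~\ref{tau-rho-1}, this also yields Theorem~\ref{tau-rho-2} at once.

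\textbf{Your route for $\tau$.} You use $4\Ex[C(U,V)]-1$ and the observation that $C$ is affine along each supporting segment. Each cell then contributes its mass times the value of $C$ at the centre. I checked the four monotonicity cases, and the uniform form $\tfrac12(w_i\omega_j+w_i'\omega_j')$ is right. It is the complement of the paper's per-cell term: the two sum to $\tfrac12(a_i+a_{i-1})(b_j+b_{j-1})$.

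\textbf{What the paper's route buys.} It is a single mechanical computation that exhibits $\tau_C=\rho_C$ as an equality of two explicit expressions before any factorization. The same cell-by-cell bookkeeping is what the authors reuse for the patched products in Theorem~\ref{thm7:18Dec}.
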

The proofs of Theorems~\ref{tau-rho-2} and \ref{tau-rho-3} are given in the Appendix.
\subsection{Patched copulas}
Suppose that $\mathcal{C}:=\set{C_{ij}}_{i=1,j=1}^{P,Q}$ is a collection of copulas. A new copula can be constructed using $\mathcal{C}$ and a \emph{transformation matrix} or \emph{mass distribution matrix} $T=[t_{ij}]$. This matrix has  $P$ rows and $Q$ columns with the following properties: All entries of $T$ are nonnegative with the total sum 1, constituting a probability measure, and no row or column of $T$ is a zero vector; see \cite{patch16,cfs05}. This matrix provides two partitions of $\I$, namely $[a_k]_{k=0}^{P}$ and $[b_\ell]_{\ell=0}^{Q}$, by $a_0=b_0=0$, $a_k=\sum_{i=1}^{k}\sum_{j=1}^{Q}t_{ij}$, and $b_\ell=\sum_{j=1}^{\ell}\sum_{i=1}^{P}t_{ij}$.
We then assign each $C_{k\ell}$ to a rectangular partition $R_{k\ell}=\paren{a_{k-1},a_k}\times\paren{b_{\ell-1},b_{\ell}}$.
The resulting copula $C$ may be called a \emph{patched copula} of $\mathcal{C}$ with respect to the matrix $T$.
Mathematically, a patched copula of $\mathcal{C}$ with respect to the transformation matrix $T$, denoted by $T(\mathcal{C})$, can be formulated by
\begin{equation}
    T(\mathcal{C})(u,v):=\sum_{i<k,\,j<\ell}t_{ij}+\left(\frac{u-a_{k-1}}{a_k-a_{k-1}}\right)\sum_{j<\ell}t_{kj}+\left(\frac{v-b_{\ell-1}}{b_{\ell}-b_{\ell-1}}\right)\sum_{i<k}t_{i\ell}+t_{k\ell}C_{k\ell}\left(\frac{u-a_{k-1}}{a_k-a_{k-1}},\frac{v-b_{\ell-1}}{b_{\ell}-b_{\ell-1}}\right),
\end{equation}
for all $\paren{u,v}\in R_{k\ell}$.
Observe that rows of $T$ from top to bottom correspond to the horizontal axis of $T(\mathcal{C})$, and columns to the vertical axis.
Equivalently, $T(\mathcal{C})$ can be represented in terms of measure of every rectangle $R=[x_1,x_2]\times[y_1,y_2]\subseteq\I^2$ as
\begin{equation}
    \mu_{T(\mathcal{C})}\paren{R}=\sum_{i=1}^{P}\sum_{j=1}^{Q}t_{ij}\mu_{C_{ij}}\big(E_{ij}\paren{R\cap R_{ij}}\big),
\end{equation}
as shown in Figure \ref{fig:patch}, 
where $E_{ij}\colon\I^2\to\I^2$ \edit{defined by $E_{ij}\paren{u,v}=\paren{F_{\mathcal{U}\brac{a_{i-1},a_i}}(u),F_{\mathcal{U}\brac{b_{j-1},b_j}}(v)}$,
and $F_{\mathcal{U}\brac{a,b}}$ denotes the distribution function of the uniform random variable on $\brac{a,b}$.
}
\begin{figure}
    \centering
    \begin{tikzpicture}[scale=2.5]
        \draw (0,0) rectangle (1,1);
        \begin{scope}[transparency group]
        \begin{scope}[blend mode=multiply]
            \filldraw[fill=gray!40](0.2,0.25) rectangle (0.7,0.75);
            \draw[thick](0,0)--(0.4,0.25);
            \fill[pattern={crosshatch}](0,0.25) rectangle (0.4,0.6);
            \draw[thick](0,1)--(0.4,0.6);
            \draw[thick](0.4,0)--(0.6,0.25)--(0.7,0);
            \draw[thick](0.7,0.25)--(0.4,0.6);
            \draw[thick](0.4,0.25)--(0.7,0.6);
            \draw[thick](0.4,0.6)--(0.7,0.8)--(0.4,0.9)--(0.7,1);
            \fill[pattern={crosshatch}](0.7,0) rectangle (1,0.25);
            \draw[thick](0.7,0.25)--(1,0.4);
            \draw[thick](0.7,0.4)--(1,0.6);
            \draw[thick](0.7,0.6)--(0.85,0.8);
            \draw[thick](0.85,1)--(1,0.8);
        \end{scope}
        \end{scope}
        \foreach \x in {0.4,0.7}{
            \draw[dashed](\x,1)--(\x,0);
        }
        \node[below] at (0,0){\footnotesize$a_0$};
        \node[below] at (0.4,0){\footnotesize$a_1$};
        \node[below] at (0.7,0){\footnotesize$a_2$}; 
        \node[below] at (1,0){\footnotesize$a_3$}; 
        \foreach \y in {0.25,0.6}{
            \draw[dashed](0,\y)--(1,\y);    
        }
        \node[left] at (0,0){\footnotesize$b_0$};
        \node[left] at (0,0.25){\footnotesize$b_1$}; 
        \node[left] at (0,0.6){\footnotesize$b_2$}; 
        \node[left] at (0,1){\footnotesize$b_3$};
        \draw[-latex] (1.25,0.5)--(2,0.5);
        \node at (3,0.5){+};
        \draw[dashed] ($(0,0)+(2.25,0.75)$) rectangle ($0.5*(1,1)+(2.25,0.75)$);
        \begin{scope}[transparency group]
        \begin{scope}[blend mode=multiply]
        \filldraw[fill=gray!40]($0.5*(0.5,0)+(2.25,0.75)$) rectangle ($0.5*(1,1)+(2.25,0.75)$);
        \fill[pattern={crosshatch}]($(0,0)+(2.25,0.75)$) rectangle ($0.5*(1,1)+(2.25,0.75)$);
        \end{scope}
        \end{scope}
        \node[above] at ($(0.25,0.5)+(2.25,0.75)$){\footnotesize weight=$t_{12}$};
        \draw[dashed] ($(0,0)+(3.25,0.75)$) rectangle ($0.5*(1,1)+(3.25,0.75)$);
        \filldraw[fill=gray!40]($0.5*(0.5,0)+(3.25,0.75)$) rectangle ($0.5*(1,3/8)+(3.25,0.75)$);
        \draw[thick]($0.5*(0,1)+(3.25,0.75)$)--($0.5*(1,0)+(3.25,0.75)$);
        \node[above] at ($(0.25,0.5)+(3.25,0.75)$){\footnotesize weight=$t_{13}$};
        \draw[dashed] ($(0,0)+(2.25,-0.25)$) rectangle ($(0.5,0.5)+(2.25,-0.25)$);
        \node[above] at ($(0.25,0.5)+(2.25,-0.25)$){\footnotesize weight=$t_{22}$};
        \filldraw[fill=gray!40]($0.5*(0,0)+(2.25,-0.25)$) rectangle ($0.5*(1,1)+(2.25,-0.25)$);
        \draw[thick]($0.5*(0,0)+(2.25,-0.25)$)--($0.5*(1,1)+(2.25,-0.25)$);
        \draw[thick]($0.5*(0,1)+(2.25,-0.25)$)--($0.5*(1,0)+(2.25,-0.25)$);
        \draw[dashed] ($(0,0)+(3.25,-0.25)$) rectangle ($0.5*(1,1)+(3.25,-0.25)$);
        \node[above] at ($(0.25,0.5)+(3.25,-0.25)$){\footnotesize weight=$t_{23}$};
        \filldraw[fill=gray!40]($0.5*(0,0)+(3.25,-0.25)$) rectangle ($0.5*(1,3/8)+(3.25,-0.25)$);
        \draw[thick]($0.5*(0,0)+(3.25,-0.25)$)--($0.5*(1,0.5)+(3.25,-0.25)$)--($0.5*(0,0.75)+(3.25,-0.25)$)--($0.5*(1,1)+(3.25,-0.25)$);
    \end{tikzpicture}
    \caption{An example of computing a patched copula in terms of measure.}
    \label{fig:patch}
\end{figure}
Notice that every copula $C_{e,\alpha}$, where $\alpha\in\mathscr{T}_{\text{PLMS}}$, can be regarded as the patched copula of $M\colon\paren{x,y}\mapsto\min\set{x,y}$ and $W\colon\paren{x,y}\mapsto\max\set{x+y-1,0}$ with respect to the column matrix $T=\begin{bmatrix}
    \triangle(a_1) & \ldots & \triangle(a_P)
\end{bmatrix}^t,$ where $$C_i=\begin{cases}
    M & \text{if}\ \alpha_i\ \text{is increasing};\\
    W & \text{if}\ \alpha_i\ \text{is decreasing}.
\end{cases}$$
Similarly, every factorizable copula $C_{e,\alpha}\ast C_{\beta,e}$, where $\alpha,\beta\in\mathscr{T}_{\text{PLMS}}$, is the patched copula of $\set{C_{ij}}\subseteq\set{M,W}$ with respect to the transformation matrix
\[T=\begin{bmatrix}
    \triangle(a_1)\triangle(b_1) & \triangle(a_1)\triangle(b_2) & \dots & \triangle(a_1)\triangle(b_Q)\\
    \triangle(a_2)\triangle(b_1) & \triangle(a_2)\triangle(b_2) & \dots & \triangle(a_2)\triangle(b_Q)\\
    \vdots & \vdots & \ddots & \vdots \\
    \triangle(a_P)\triangle(b_1) & \triangle(a_P)\triangle(b_2) & \dots & \triangle(a_P)\triangle(b_Q)
\end{bmatrix},\]
where $C_{ij}$ is $M$ or $W$ depending on whether $\paren{\alpha_i,\beta_j}$ have the same or different monotonicity, respectively.
\par 
Now, suppose that $S$ and $T$ are $P\times1$ and $1\times Q$ transformation matrices, respectively, and $\mathcal{C}:=\left\{C_i\right\}_{i=1}^{P}$, $\mathcal{D}:=\left\{D_j\right\}_{j=1}^{Q}$ are collections of copulas.
The formula for $S(\mathcal{C})\ast T(\mathcal{D})$ can be expressed in the following lemma.
\begin{lem}\label{lem6:18Dec}
    Let $[a_i]_{i=0}^{P}$ and $[b_j]_{j=0}^{Q}$ be partitions of $\I$ and $\mathcal{C}=\left\{C_i\right\}_{i=1}^{P}$, $\mathcal{D}=\left\{D_j\right\}_{j=1}^{Q}$ be collections of copulas.
    Setting the transformation matrices $S=[\triangle(a_i)]_{P\times1}$ and $T=[\triangle(b_j)]_{1\times Q}$, as well as $A:=S(\mathcal{C})\ast T(\mathcal{D})$.
    Then for any $(u,v)\in R_{k\ell}$,
    \[A(u,v)=a_{k-1}v+b_{\ell-1}u-a_{k-1}b_{\ell-1}+\triangle(a_k)\triangle(b_{\ell})(C_k\ast D_{\ell})\left(\frac{u-a_{k-1}}{\triangle(a_k)},\frac{v-b_{\ell-1}}{\triangle(b_{\ell})}\right).\]
\end{lem}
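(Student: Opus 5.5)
We compute $A(u,v)=\int_0^1\partial_2 S(\mathcal{C})(u,t)\,\partial_1 T(\mathcal{D})(t,v)\,dt$ directly from the definition of the Markov product, splitting the $t$-integral over the common intermediate variable. The key point is that $S(\mathcal{C})$ only partitions the first coordinate: for $u\in(a_{k-1},a_k)$,
\[S(\mathcal{C})(u,t)=a_{k-1}t+\triangle(a_k)\,C_k\!\left(\tfrac{u-a_{k-1}}{\triangle(a_k)},t\right).\]
Similarly $T(\mathcal{D})$ only partitions the second coordinate: for $v\in(b_{\ell-1},b_\ell)$,
\[T(\mathcal{D})(t,v)=b_{\ell-1}t+\triangle(b_\ell)\,D_\ell\!\left(t,\tfrac{v-b_{\ell-1}}{\triangle(b_\ell)}\right).\]
Both formulas follow by specializing the defining formula of $T(\mathcal{C})$ to a single column or row: the sums over $i<k$ and $j<\ell$ collapse and the cell in the other coordinate is all of $\I$.

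Differentiating gives
\[\partial_2 S(\mathcal{C})(u,t)=a_{k-1}+\triangle(a_k)\,\partial_2C_k(\hat u,t),\qquad \partial_1 T(\mathcal{D})(t,v)=b_{\ell-1}+\triangle(b_\ell)\,\partial_1D_\ell(t,\hat v),\]
where $\hat u:=(u-a_{k-1})/\triangle(a_k)$ and $\hat v:=(v-b_{\ell-1})/\triangle(b_\ell)$. Both identities hold for a.e.\ $t$, which is all the Markov product needs. Multiplying out yields four integrals over $t\in\I$:
\begin{itemize}
\item $\int_0^1 a_{k-1}b_{\ell-1}\,dt=a_{k-1}b_{\ell-1}$;
\item $a_{k-1}\triangle(b_\ell)\int_0^1\partial_1D_\ell(t,\hat v)\,dt=a_{k-1}\triangle(b_\ell)\hat v$, using the boundary conditions $D_\ell(1,\hat v)=\hat v$ and $D_\ell(0,\hat v)=0$;
\item $b_{\ell-1}\triangle(a_k)\int_0^1\partial_2C_k(\hat u,t)\,dt=b_{\ell-1}\triangle(a_k)\hat u$, by the same argument;
\item $\triangle(a_k)\triangle(b_\ell)\int_0^1\partial_2C_k(\hat u,t)\,\partial_1D_\ell(t,\hat v)\,dt=\triangle(a_k)\triangle(b_\ell)\,(C_k\ast D_\ell)(\hat u,\hat v)$, by definition of $\ast$.
\end{itemize}
Substituting $\triangle(b_\ell)\hat v=v-b_{\ell-1}$ and $\triangle(a_k)\hat u=u-a_{k-1}$, the constant terms combine as
\[a_{k-1}b_{\ell-1}+a_{k-1}(v-b_{\ell-1})+b_{\ell-1}(u-a_{k-1})=a_{k-1}v+b_{\ell-1}u-a_{k-1}b_{\ell-1},\]
which is exactly the stated formula.

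The main obstacle is the first step: obtaining the single-column and single-row forms of the patched copulas, and justifying the partial derivatives. The partial derivatives of a copula exist only a.e.\ and are bounded in $[0,1]$, and the Markov product is defined through them, so a.e.\ equality suffices. The boundary points $u=a_k$ or $v=b_\ell$ can be handled by continuity of both sides, so the formula can be taken on the closed rectangle. I would check the reduction carefully against the general patched formula: with $Q=1$ there is no $j<\ell$ term, and the coefficient $\sum_{i<k}t_{i1}=a_{k-1}$ multiplies $t$.
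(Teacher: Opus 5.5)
Your proposal is correct and matches the paper's proof essentially line for line. The paper likewise writes $S(\mathcal{C})(u,t)$ and $T(\mathcal{D})(t,v)$ in single-column/single-row form, differentiates, expands the Markov-product integrand into the same four terms, and evaluates them via the copula boundary conditions and the definition of $C_k\ast D_\ell$; your remarks on a.e.\ existence of the partial derivatives and on extending to the closed rectangle by continuity are refinements the paper leaves implicit.
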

\begin{proof}
    Let $(u,v)\in R_{k\ell}$.
    Then for any $t\in\I$, the pointwise values and related partial derivatives of $S(\mathcal{C})$ and $T(\mathcal{D})$ become
    \begin{align*}
        S(\mathcal{C})(u,t)&=a_{k-1}t+\triangle(a_k)C_k\left(\frac{u-a_{k-1}}{\triangle(a_k)},t\right),
        &&T(\mathcal{D})(t,v)=b_{\ell-1}t+\triangle(b_{\ell})D_{\ell}\left(t,\frac{v-b_{\ell-1}}{\triangle(b_{\ell})}\right),\\
        \partial_2S(\mathcal{C})(u,t)&=a_{k-1}+\triangle(a_k)\partial_2C_k\left(\frac{u-a_{k-1}}{\triangle(a_k)},t\right),
        &&\partial_1T(\mathcal{D})(t,v)=b_{\ell-1}+\triangle(b_{\ell})\partial_1D_{\ell}\left(t,\frac{v-b_{\ell-1}}{\triangle(b_{\ell})}\right).
    \end{align*}
    Hence, 
    \begin{align*}
       A(u,v)
        &=a_{k-1}b_{\ell-1}+a_{k-1}\triangle(b_{\ell})\int_{0}^{1}\partial_1D_{\ell}\left(t,\frac{v-b_{\ell-1}}{\triangle(b_{\ell})}\right)dt+b_{\ell-1}\triangle(a_k)\int_{0}^{1}\partial_2C_k\left(\frac{u-a_{k-1}}{\triangle(a_k)},t\right)dt\\
        &\ \ \ +\triangle(a_k)\triangle(b_{\ell})\int_{0}^{1}\partial_2C_k\left(\frac{u-a_{k-1}}{\triangle(a_k)},t\right)\partial_1D_{\ell}\left(t,\frac{v-b_{\ell-1}}{\triangle(b_{\ell})}\right)dt\\
        &=a_{k-1}b_{\ell-1}+a_{k-1}\triangle(b_{\ell})\left(\frac{v-b_{\ell-1}}{\triangle(b_{\ell})}\right)+b_{\ell-1}\triangle(a_k)\left(\frac{u-a_{k-1}}{\triangle(a_k)}\right)\\
        &\ \ \ +\triangle(a_k)\triangle(b_{\ell})(C_k\ast D_{\ell})\left(\frac{u-a_{k-1}}{\triangle(a_k)},\frac{v-b_{\ell-1}}{\triangle(b_{\ell})}\right)\\
        &=a_{k-1}v+b_{\ell-1}u-a_{k-1}b_{\ell-1}+\triangle(a_k)\triangle(b_{\ell})(C_k\ast D_{\ell})\left(\frac{u-a_{k-1}}{\triangle(a_k)},\frac{v-b_{\ell-1}}{\triangle(b_{\ell})}\right). \qedhere
    \end{align*}
\end{proof}
From Lemma~\ref{lem6:18Dec}, it is rather tedious to calculate Kendall's tau and Spearman's rho of $A$, although the result is predictable.
\begin{thm}\label{thm7:18Dec}
    Let $A=S(\mathcal{C})\ast T(\mathcal{D})$ where $S,T,\mathcal{C},$ and $\mathcal{D}$ are as in Lemma~\ref{lem6:18Dec}.
    Then $$\displaystyle\tau_A=\sum_{i=1}^{P}\sum_{j=1}^{Q}(\triangle(a_i))^2(\triangle(b_j))^2\tau_{C_i\ast D_j}\text{ and }\displaystyle\rho_A=\sum_{i=1}^{P}\sum_{j=1}^{Q}(\triangle(a_i))^2(\triangle(b_j))^2\rho_{C_i\ast D_j}.$$
\end{thm}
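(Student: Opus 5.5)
Both formulas follow from Lemma~\ref{lem6:18Dec}: on each rectangle $R_{k\ell}$, the copula $A$ is an affine "background" term plus a rescaled copy of $E_{k\ell}:=C_k\ast D_\ell$ carrying mass $\triangle(a_k)\triangle(b_\ell)$. I would also observe that $A$ is the patched copula $U(\{E_{ij}\})$ with $U=[\triangle(a_i)\triangle(b_j)]$, so $\mu_A(R_{ij})=\triangle(a_i)\triangle(b_j)$ and the marginals within each block are uniform.

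\emph{Spearman's rho.} I would write $\rho_A=12\iint_{\I^2}A\,dx\,dy-3$ and split the integral over the $R_{k\ell}$. On $R_{k\ell}$ the affine part integrates by elementary calculus. Substituting $s=(u-a_{k-1})/\triangle(a_k)$ and $t=(v-b_{\ell-1})/\triangle(b_\ell)$ gives
\[\triangle(a_k)^2\triangle(b_\ell)^2\iint E_{k\ell}=\triangle(a_k)^2\triangle(b_\ell)^2\,\frac{\rho_{E_{k\ell}}+3}{12}.\]
The claim then reduces to the identity
\[12\sum_{k,\ell}\iint_{R_{k\ell}}\bigl(a_{k-1}v+b_{\ell-1}u-a_{k-1}b_{\ell-1}\bigr)\,du\,dv+3\sum_{k,\ell}\triangle(a_k)^2\triangle(b_\ell)^2=3.\]
A cleaner way to verify it is to note that the left side equals $12\iint_{\I^2}\Pi_U$, where $\Pi_U$ is the patched copula with every $E_{ij}$ replaced by $\Pi$. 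This is just $\Pi$ itself, because the mass is uniform on every block, so $\rho_{\Pi}=0$ forces the identity. Concretely, $12\iint_{\I^2}\Pi_U-3=0$, and the same decomposition applied to $\Pi$ (using $\rho_\Pi=0$) gives exactly the displayed equation.

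\emph{Kendall's tau.} I would use $\tau_A=4\iint A\,dA-1$. Since $\mu_A$ restricted to $R_{k\ell}$ is $\triangle(a_k)\triangle(b_\ell)$ times the pushforward of $\mu_{E_{k\ell}}$ under the affine map, I get
\[\iint_{R_{k\ell}}A\,dA=\triangle(a_k)\triangle(b_\ell)\,\Ex\Bigl[\text{affine}_{k\ell}(X,Y)+\triangle(a_k)\triangle(b_\ell)E_{k\ell}(S,T)\Bigr],\qquad (S,T)\sim E_{k\ell}.\]
The affine part depends only on the uniform marginals of $(S,T)$, so it equals the corresponding quantity for $\Pi$. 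The last term gives $\triangle(a_k)^2\triangle(b_\ell)^2\,\frac{\tau_{E_{k\ell}}+1}{4}$. Subtracting the same decomposition written for $\Pi_U=\Pi$, where $\tau_\Pi=0$, removes all the affine terms. What remains is $\tau_A=\sum_{k,\ell}\triangle(a_k)^2\triangle(b_\ell)^2\tau_{E_{k\ell}}$, as claimed.

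The main obstacle is the bookkeeping of the cross terms, which is the "tedious" part. The comparison-with-$\Pi$ trick is what I expect to remove it: it relies on the affine parts being linear in $u$ and $v$, hence integrating only the marginals. It also relies on the fact that replacing each $E_{k\ell}$ by $\Pi$ yields $\Pi$. I would check that fact directly from Lemma~\ref{lem6:18Dec} with $C_k\ast D_\ell=\Pi$: the formula becomes $a_{k-1}v+b_{\ell-1}u-a_{k-1}b_{\ell-1}+(u-a_{k-1})(v-b_{\ell-1})=uv$.
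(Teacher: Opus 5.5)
Your proposal is correct, and it differs from the paper mainly in how the ``background'' terms are handled.

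The paper evaluates them explicitly. For $\rho_A$ it integrates the formula of Lemma~\ref{lem6:18Dec} over each $R_{k\ell}$. For $\tau_A$ it uses $\tau=1-4\iint\partial_1A\,\partial_2A$ and differentiates that formula blockwise. In both cases, all terms other than the one carrying $\triangle(a_k)^2\triangle(b_\ell)^2$ add up to $\tfrac12(a_{k-1}b_\ell+a_kb_{\ell-1})\triangle(a_k)\triangle(b_\ell)$ per block. The proof is then closed by a separately verified telescoping identity,
\[1-2\sum_{i,j}(a_{i-1}b_j+a_ib_{j-1})\triangle(a_i)\triangle(b_j)=\sum_{i,j}\triangle(a_i)^2\triangle(b_j)^2.\]

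You never compute these terms:
\begin{itemize}
\item For $\rho$, you read that identity as $\rho_\Pi=0$ written blockwise, since inserting $\Pi$ in every block returns $uv$.
\item For $\tau$, you use $\tau=4\iint A\,dA-1$ together with the block structure of $\mu_A$. This structure does follow from the lemma: $a_{k-1}v+b_{\ell-1}u-a_{k-1}b_{\ell-1}$ has the form $f(u)+g(v)$ and so carries no mass, and the grid lines are null by uniform marginals. Hence the affine part is integrated only against uniform marginals and cancels against the same computation for $\Pi$, where $\tau_\Pi=0$.
\end{itemize}

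What each route buys:
\begin{itemize}
\item The paper's route is fully explicit and works only with $A$ and its partial derivatives, without needing any description of $\mu_A$.
\item Yours eliminates the bookkeeping and explains why only the $\triangle(a_i)^2\triangle(b_j)^2$-weighted terms survive.
\item Yours also transfers verbatim to a patched copula with an arbitrary mass matrix $[t_{ij}]$. There, relative to the patch with all blocks equal to $\Pi$, it gives weights $t_{ij}^2$ for $\tau$ but $t_{ij}\triangle(a_i)\triangle(b_j)$ for $\rho$. This shows that the product form $t_{ij}=\triangle(a_i)\triangle(b_j)$ is exactly what makes the two formulas parallel, and also what makes the all-$\Pi$ patch equal to $\Pi$.
\end{itemize}
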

The proof of Theorem \ref{thm7:18Dec} is given in the Appendix. Furthermore, this statement is a generalization of Theorems~\ref{tau-rho-1} and \ref{tau-rho-3}, which provides an alternative approach for the fact that $\tau_C=\rho_C$ for $C=C_{e,\alpha},C_{\beta,e}$, and $C_{e,\alpha}\ast C_{\beta,e}$ where $\alpha,\beta\in\mathscr{T}_{\text{PLMS}}$.
\par Finally, we observe from Theorem \ref{thm7:18Dec} that for any sequences of transformation matrices $\paren{S_n}$ and $\paren{T_n}$ in which either $P_n=\#\text{rows of}\ S_n$ or $Q_n=\#\text{columns of}\ T_n$ tends to $\infty$ and the biggest entry of the corresponding matrices converges to zero, the associated sequences of Kendall's $\tau$ and Spearman's $\rho$, namely, $\tau_{A_n}$ and $\rho_{A_n}$ with $A_n=S_n(\mathcal{C}^n)\ast T_n(\mathcal{D}^n)$, converge to zero for every sequence of collections of copulas $\mathcal{C}^n$ and $\mathcal{D}^n$ corresponding to $S_n$ and $T_n$, respectively. 
A similar observation is formalized in Corollary \ref{approx-pi-1}.
\subsection{Implicit dependence copulas}
Both complete dependence and factorizable copulas are special cases of a large subclass of singular copulas, namely \emph{implicit dependence copulas}, which was studied extensively in the literature, e.g., \cite{CPMS22,Sum2017,Join2025}.
\edit{This includes the concordance measures of some hairpin copulas, a specific type of diagonal and implicit dependence copulas, see e.g.,~\cite{FSW26}.}
Expressing a copula of this type in terms of a generalized Markov product between left and right invertible copulas is given in \cite{product21,CPMS22}.
Note that $\ast_{\mathcal{A}}$ is a binary operation with the identity $M$, but it may not have the associative property in general.
Thus, it is rather complicated to determine the equality of $\tau_C$ and $\rho_C$, even in the case $C=C_{e,\alpha}\ast_{\mathcal{A}}C_{\beta,e}$, $\alpha,\beta\in\mathscr{T}_{\text{PLMS}}$, and $A_t=A$ for all $t$.
However, in our setting, the collection of joining copulas $\mathcal{A}=\set{A_t}_{t\in\I}$ is supposed to be defined as a piecewise function with respect to $t$, i.e., $A_t=C_{\ell}$ for every $t\in\paren{t_{\ell-1},t_{\ell}}$ whenever $[0,1]=\bigcup_{\ell=1}^{L}\paren{t_{\ell-1},t_\ell}$ almost surely.
Hence, the pointwise value of $C$, for $(u,v)\in\paren{a_{r-1},a_r}\times\paren{b_{s-1},b_s}$ such that $\paren{\alpha(u),\beta(v)}\in\paren{t_{p-1},t_p}\times\paren{t_{q-1},t_q}$, is determined by
\begin{equation}\label{eq3:9Apr}
        C(u,v)=\begin{cases}
        \begin{aligned}
            &\alpha(u)\brac{C_p\paren{m^+(\alpha_r),m^+(\beta_s)}-C_p\paren{m^-(\alpha_r),m^+(\beta_s)}}\\
            &+\beta(v)\brac{C_q\paren{m^-(\alpha_r),m^+(\beta_s)}-C_q\paren{m^-(\alpha_r),m^-(\beta_s)}}\\
            &-\sum_{k=1}^{L-1}\brac{C_{k+1}\paren{m_{k,p}(\alpha_r),m_{k,q}(\beta_s)}-C_{k}\paren{m_{k,p}(\alpha_r),m_{k,q}(\beta_s)}}t_k\\
            &+C_L\paren{m^-(\alpha_r),m^-(\beta_s)}\\
            &\phantom{...}
        \end{aligned}
        & \text{if}\ \alpha(u)\le\beta(v);\\
        \begin{aligned}
            &\beta(v)\brac{C_q\paren{m^+(\alpha_r),m^+(\beta_s)}-C_q\paren{m^+(\alpha_r),m^-(\beta_s)}}\\
            &+\alpha(u)\brac{C_p\paren{m^+(\alpha_r),m^-(\beta_s)}-C_p\paren{m^-(\alpha_r),m^-(\beta_s)}}\\
            &-\sum_{k=1}^{L-1}\brac{C_{k+1}\paren{m_{k,p}(\alpha_r),m_{k,q}(\beta_s)}-C_{k}\paren{m_{k,p}(\alpha_r),m_{k,q}(\beta_s)}}t_k\\
            &+C_L\paren{m^-(\alpha_r),m^-(\beta_s)}
        \end{aligned}
        & \text{if}\ \alpha(u)\ge\beta(v),
    \end{cases}
    \end{equation}
    where $m^+(f):=\amax(f)$, $m^-(f):=\amin(f)$, and $m_{k,h}(f)=\begin{cases}
        m^+(f) & \text{if}\ k<h; \\
        m^-(f) & \text{if}\ k\ge h.
    \end{cases}$\\
In particular, if $A_t=A$ for all $t\in\I$, then the summand in \eqref{eq3:9Apr} is zero and hence \eqref{eq3:9Apr} becomes
\begin{equation}\label{eq4:9Apr}
        C(u,v)=\begin{cases}
        \begin{aligned}
        &\alpha(u)\brac{A\paren{m^+(\alpha_r),m^+(\beta_s)}-A\paren{m^-(\alpha_r),m^+(\beta_s)}}\\
        &+\beta(v)\brac{A\paren{m^-(\alpha_r),m^+(\beta_s)}-A\paren{m^-(\alpha_r),m^-(\beta_s)}}\\
        &+A\paren{m^-(\alpha_r),m^-(\beta_s)}    
        \end{aligned}
    & \text{if}\ \alpha(u)\le\beta(v);\\
        \begin{aligned}
        &\beta(v)\brac{A\paren{m^+(\alpha_r),m^+(\beta_s)}-A\paren{m^+(\alpha_r),m^-(\beta_s)}}\\
        &+\alpha(u)\brac{A\paren{m^+(\alpha_r),m^-(\beta_s)}-A\paren{m^-(\alpha_r),m^-(\beta_s)}}\\
        &+A\paren{m^-(\alpha_r),m^-(\beta_s)}    
        \end{aligned}
    & \text{if}\ \alpha(u)\ge\beta(v).
    \end{cases}
    \end{equation}
    \begin{ex}\label{sec3-ex2}
        A simple case of \eqref{eq3:9Apr} is $\paren{C_{e,\alpha},C_{\beta,e}}=\paren{C_{\theta_1}^{\oplus,\ominus},\paren{C_{\theta_2}^{\oplus,\ominus}}^t}$, as shown exemplarily in Figure~\ref{fig2:23Oct}.
    \begin{figure}
        \centering
        \begin{tikzpicture}[scale=2.25]
            \draw (0,0) rectangle (1,1);
            \draw[dashed](0,0.4)--(0.5,0.4)--(0.5,0) node[below]{\scriptsize$\theta_1=0.5$};
            \node[left] at (0,0.4){\scriptsize$\theta_2=0.4$};
            \draw[dashed,thin](0.25,0.2) rectangle (0.75,0.7);
            \draw (0.25,0.2)--(0.5,0.4)--(0.75,0.2);
            \draw (0.25,0.7)--(0.5,0.4)--(0.75,0.7);
            \draw[very thick] (0,0)--(0.25,0.2);
            \draw[very thick] (0,1)--(0.25,0.7);
            \draw[very thick] (0.75,0.7)--(1,1);
        \end{tikzpicture}
        \caption{The product $C_{\theta_1}^{\oplus,\ominus}\ast_{\mathcal{A}}(C_{\theta_2}^{\oplus,\ominus})^t$ and $A_t=\begin{cases}
            M & \text{if}\ t\in\left[0,\frac{1}{2}\right],\\
            \Pi & \text{if}\ t\in(\frac{1}{2},1].
        \end{cases}$}
        \label{fig2:23Oct}
    \end{figure}
    Note that $\tau_{C_{\theta_i}^{\oplus,\ominus}}=\rho_{C_{\theta_i}^{\oplus,\ominus}}=2\theta_i-1$ for $i=1,2$, see Example \ref{sec3-ex1}. 
    In addition, by setting $D=C_{\theta_1}^{\oplus,\ominus}\ast_{\mathcal{A}}\paren{C_{\theta_2}^{\oplus,\ominus}}^t$, the concordance measures of $D$ in the case $A_t=A$ for all $t$ can be computed straightforwardly as
    \[\tau_D=\rho_D=1-2\paren{\theta_1+\theta_2}+4A\paren{\theta_1,\theta_2}.\]
    On the other hand, if there is $s\in\paren{0,1}$ such that $A_t=\begin{cases}
        A_1 & \text{if}\ t\in[0,s];\\
        A_2 & \text{if}\ t\in(s,1],
    \end{cases}$
    the concordance measures of $D$ are more complicated, and the inequality $\tau_D\ne\rho_D$ occurs unless $A_1\paren{\theta_1,\theta_2}=A_2\paren{\theta_1,\theta_2}$.
    More precisely,
    \begin{align*}
        \tau_D&=1-2\paren{\theta_1+\theta_2}+4A_1\paren{\theta_1,\theta_2}+4\brac{A_2\paren{\theta_1,\theta_2}-A_1\paren{\theta_1,\theta_2}}(1-s)^2;\\
        \rho_D&=1-2\paren{\theta_1+\theta_2}+4A_1\paren{\theta_1,\theta_2}+4\brac{A_2\paren{\theta_1,\theta_2}-A_1\paren{\theta_1,\theta_2}}(1-s)^3.
    \end{align*}
    A numerical example in this case is shown in Figure~\ref{fig2:23Oct}, for which $\tau_D=\frac{3}{5}\ne\frac{7}{10}=\rho_D$.
    \end{ex}
\section{Intriguing properties of PLMS and related copulas}\label{sec:PLMS-prop}
In this section, we investigate other dependence measures and more properties of copulas created from PLMS.
\subsection{Chatterjee's correlation coefficient}
Besides relying on concordance, a measure of dependence can also be created with the intention to detect a relation between random variables, regardless of the direction, as done by Chatterjee \cite{Chat21}. Recall that the \emph{Chatterjee's rank correlation} in terms of copula $C$, denoted by $\xi_C$, is formulated by $$\xi_C=6\iint_{\I^2}\paren{\partial_1C(u,v)}^2dudv-2.$$
Even when $C$ is not absolutely continuous, the partial derivative $\partial_1 C(u,v)$ exists for Lebesgue-almost every $u$ and
$\partial_1 C(u,v)$ is a version of the regular conditional distribution function
$v \mapsto \Pr(V \le v \mid U = u)$.
Thus, for almost every $u \in [0,1]$,
$v \mapsto \partial_1 C(u,v)$ is the conditional distribution of $V$ given $U=u$. Now, if $\alpha\in\mathscr{T}_{\text{PLMS}}$, it is not hard to see that $\xi_{C_{e,\alpha}}=1$ by the properties of conditional expectation and $V=\alpha(U)$. Conversely, the coefficient $\xi_{C_{\alpha,e}}$ can be formulated in the following statement.
\begin{thm}\label{chat-1}
    Let $\alpha\in\mathscr{T}_{\text{PLMS}}$ with a partition $[a_i]_{i=0}^{N}$.
    Then \[\xi_{C_{\alpha,e}}=1-3\sum_{i=1}^{N}a_ia_{i-1}(a_i-a_{i-1}).\]
\end{thm}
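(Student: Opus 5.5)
The plan is to compute $\partial_1 C_{\alpha,e}$ explicitly as a conditional distribution function, then integrate its square using the PLMS structure. Only the interval order and the partition points enter the final sum; the monotonicity of each $\alpha_i$ will drop out.

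\textbf{Step 1: the conditional law.} By definition, $C_{\alpha,e}(x,y)=\lambda(\alpha^{-1}[0,x]\cap[0,y])$. So $C_{\alpha,e}$ is the law of $(U,V)=(\alpha(W),W)$ with $W\sim\mathcal{U}[0,1]$.

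Fix $u\in(0,1)$. Each branch $\alpha_i$ is a bijection from $(a_{i-1},a_i)$ onto $(0,1)$, so $u$ has exactly one preimage $x_i(u)\in(a_{i-1},a_i)$ in each interval. Since $|\alpha_i'|=1/\triangle(a_i)$, the conditional law of $V$ given $U=u$ puts mass $\triangle(a_i)$ on $x_i(u)$. One can check this against the definition: differentiate $\lambda(\alpha^{-1}[0,x]\cap[0,y])=\sum_i\triangle(a_i)\,\lambda\big(\{s\le x:\ a_{i-1}+\ldots\}\big)$ piece by piece, or equivalently differentiate the transpose of \eqref{PLMS-CDC} in its first argument. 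This gives, for a.e.\ $u$,
\[
\partial_1C_{\alpha,e}(u,v)=\sum_{i=1}^N\triangle(a_i)\,\mathbbm{1}\{x_i(u)\le v\}.
\]

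\textbf{Step 2: integrate the square.} Expanding the square and integrating over $v$ first gives
\[
\iint(\partial_1C)^2=\sum_{i,j}\triangle(a_i)\triangle(a_j)\int_0^1\big(1-\max\{x_i(u),x_j(u)\}\big)\,du.
\]
The key observation is that for $i\le j$ the intervals are ordered, so $\max\{x_i(u),x_j(u)\}=x_j(u)$. Moreover, $u\mapsto x_j(u)$ is an affine bijection from $(0,1)$ onto $(a_{j-1},a_j)$, increasing or decreasing. Hence $\int_0^1x_j(u)\,du=m_j:=(a_{j-1}+a_j)/2$, whatever the monotonicity of $\alpha_j$.

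Grouping the pairs by $j=\max(i,j)$, the weight attached to $j$ is $\triangle(a_j)^2+2\triangle(a_j)a_{j-1}=a_j^2-a_{j-1}^2$. Therefore
\[
\iint(\partial_1C)^2=\sum_j(1-m_j)(a_j^2-a_{j-1}^2)=1-\tfrac12\sum_j(a_j+a_{j-1})^2\triangle(a_j).
\]

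\textbf{Step 3: algebra.} Write
\[
(a_j+a_{j-1})^2\triangle(a_j)=(a_j^2+a_ja_{j-1}+a_{j-1}^2)\triangle(a_j)+a_ja_{j-1}\triangle(a_j)=(a_j^3-a_{j-1}^3)+a_ja_{j-1}\triangle(a_j).
\]
The cubic terms telescope to $1$. So the double integral equals $\tfrac12-\tfrac12\sum_ja_ja_{j-1}\triangle(a_j)$. Plugging into \eqref{xi-copbase} gives $\xi=6\cdot(\ldots)-2=1-3\sum_ia_ia_{i-1}(a_i-a_{i-1})$, as claimed.

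\textbf{Main obstacle.} The only delicate step is Step 1: justifying the formula for $\partial_1C_{\alpha,e}$ almost everywhere, in particular that each branch contributes weight $\triangle(a_i)$ independently of its orientation. I would verify this directly by differentiating $C_{\alpha,e}(u,v)=\sum_i\triangle(a_i)\,\lambda\{z\in(0,1):\alpha_i^{-1}(z)\le v,\ z\le u\}/1$ in $u$. Each summand is $\triangle(a_i)\min\{u,\ \text{const}\}$ or $\triangle(a_i)\max\{u-\text{const},0\}$, with the constant determined by $v$. Steps 2 and 3 are then routine.
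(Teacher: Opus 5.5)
Your proof is correct. Step 1 derives the same kernel the paper uses: the paper obtains it as $\partial_1C_{\alpha,e}(u,v)=\partial_2C_{e,\alpha}(v,u)$ from \eqref{PLMS-CDC}. The two arguments differ in how the square is integrated.

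The paper cuts the $v$-axis into the strips $(a_{i-1},a_i)$. On each strip, $\partial_1C_{\alpha,e}$ takes only the two values $a_{i-1}$ and $a_i$, each on a triangle of area $\triangle(a_i)/2$. This gives $\iint(\partial_1C)^2=\frac12\sum_i(a_{i-1}^2+a_i^2)\triangle(a_i)$ at once.

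You instead keep the full mixture $\sum_i\triangle(a_i)\mathbbm{1}\{x_i(u)\le v\}$ and expand the square over pairs of branches. You handle the cross terms through $\int_0^1(1-\max\{x_i(u),x_j(u)\})\,du$ and regroup by the larger index. This lands on $1-\frac12\sum_i(a_i+a_{i-1})^2\triangle(a_i)$. That agrees with the paper's expression, since $\frac12\sum_i(a_{i-1}^2+a_i^2)\triangle(a_i)+\frac12\sum_i(a_i+a_{i-1})^2\triangle(a_i)=\sum_i(a_i^3-a_{i-1}^3)=1$.

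Your formula already contains the paper's shortcut. For $v\in(a_{k-1},a_k)$ it collapses to $a_{k-1}+\triangle(a_k)\mathbbm{1}\{x_k(u)\le v\}$. Restricting to strips first would therefore spare you the cross terms.

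Your route has a conceptual advantage. It derives the Markov kernel from the change of variables, with weight $1/|\alpha_i'|=\triangle(a_i)$, which makes it transparent why the orientation of each branch is irrelevant. Your Step 2 is an instance of the identity $\iint(\partial_1C)^2=\mathbb{E}[1-\max\{V_1,V_2\}]$, where $V_1,V_2$ are conditionally i.i.d.\ given $U$; this carries over directly to other piecewise monotone transformations. The paper's route is shorter because it reuses the pointwise formula already established.

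One cosmetic fix: the verification sentence in Step 1 contains an unfinished expression, $\lambda(\{s\le x:\ a_{i-1}+\ldots\})$. Replace it with the clean change-of-variables identity from your final paragraph.
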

\begin{proof}
    Since $\partial_1C_{\alpha,e}(u,v)=\partial_2C_{e,\alpha}(v,u)$ and the value is a constant on each subregion of $(a_{i-1},a_i)\times\I$ (see Figure \ref{alpha_i-diff}). 
    \begin{figure}
        \centering
        \begin{tikzpicture}[scale=1.75]
            \draw (0,0) rectangle (0.7,1);
            \draw[very thick](0,0)--(0.7,1);
            \node[below] at (0,0) {\footnotesize$a_{i-1}$};
            \node[below] at (0.7,0) {\footnotesize$a_{i}$};
            \node[anchor=south east] at (0,0){\footnotesize 0};
            \node[left] at (0,1){\footnotesize 1};
            \node at (0.25,0.8) {\footnotesize$a_{i-1}$};
            \node at (0.5,0.25) {\footnotesize$a_{i}$};
            \node[below] at (0.35,-0.25) {$\alpha_i$ is increasing};
        \end{tikzpicture} \hspace{1em}
        \begin{tikzpicture}[scale=1.75]
            \draw (0,0) rectangle (0.7,1);
            \draw[very thick](0,1)--(0.7,0);
            \node[below] at (0,0) {\footnotesize$a_{i-1}$};
            \node[below] at (0.7,0) {\footnotesize$a_{i}$};
            \node[anchor=south east] at (0,0){\footnotesize 0};
            \node[left] at (0,1){\footnotesize 1};
            \node at (0.25,0.25) {\footnotesize$a_{i-1}$};
            \node at (0.5,0.8) {\footnotesize$a_i$};
            \node[below] at (0.35,-0.25) {$\alpha_i$ is decreasing};
        \end{tikzpicture}
        \caption{$\partial_2C_{e,\alpha}$ for $\alpha\in\mathscr{T}_{\text{PLMS}}$.}
        \label{alpha_i-diff}
    \end{figure}
    This implies that
    \begin{align*}
    \xi_{C_{\alpha,e}}&=6\sum_{i=1}^{N}\iint_{(a_{i-1},a_i)\times\I}\paren{\partial_2C_{e,\alpha}(u,v)}^2dudv-2=6\sum_{i=1}^{N}\paren{a_{i-1}^2\cdot\frac{\triangle(a_i)}{2}+a_i^2\cdot\frac{\triangle(a_i)}{2}}-2\\
    &=3\sum_{i=1}^{N}(a_i^3-a_{i-1}^3)-3\sum_{i=1}^{N}(a_{i-1}a_i^2-a_ia_{i-1}^2)-2=1-3\sum_{i=1}^{N}a_ia_{i-1}\triangle(a_i).\qedhere
    \end{align*}
    \end{proof}
As mentioned earlier, $\xi_{C_{e,\beta\circ\alpha}}=1=\xi_{C_{e,\beta}}\cdot \xi_{C_{e,\alpha}}$.
The following statement shows that Chatterjee's rank correlation for the copula $C_{\beta\circ\alpha,e}$ has the same behavior as $C_{e,\beta\circ\alpha}$. 
The proof of this statement is given in the Appendix.
\begin{thm}\label{chat-2}
    Let $\alpha,\beta\in\mathscr{T}_{\text{PLMS}}$.
    Then: $$\xi_{C_{\beta,e}\ast C_{\alpha,e}}=\xi_{C_{\beta\circ\alpha,e}}=\xi_{C_{\beta,e}}\cdot \xi_{C_{\alpha,e}}.$$
\end{thm}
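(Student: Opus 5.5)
The plan is to rewrite the formula of Theorem~\ref{chat-1} in a form that is visibly multiplicative over the partition, and then use the explicit partition \eqref{composite-1} of the composition.

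\textbf{Step 1 (the first equality).} The identity $C_{\beta,e}\ast C_{\alpha,e}=C_{\beta\circ\alpha,e}$ is the rule $C_{\alpha,e}\ast C_{\beta,e}=C_{\alpha\circ\beta,e}$ recalled in the previous section, with the roles of $\alpha$ and $\beta$ swapped. As observed there, $\beta\circ\alpha\in\mathscr{T}_{\text{PLMS}}$ with the partition $[c_k]_{k=0}^{PQ}$ given by \eqref{composite-1}. Hence Theorem~\ref{chat-1} applies to $C_{\beta\circ\alpha,e}$.

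\textbf{Step 2 (reformulating Theorem~\ref{chat-1}).} For any partition $[a_i]_{i=0}^N$ we have the elementary identity
\[
a_i^3-a_{i-1}^3=\triangle(a_i)^3+3a_ia_{i-1}\triangle(a_i).
\]
It follows from $a_i^3-a_{i-1}^3=\triangle(a_i)\,(a_i^2+a_ia_{i-1}+a_{i-1}^2)$ together with $a_i^2+a_{i-1}^2=\triangle(a_i)^2+2a_ia_{i-1}$. Summing over $i$ telescopes the left-hand side to $1$. Therefore
\[
\xi_{C_{\alpha,e}}=1-3\sum_{i=1}^N a_ia_{i-1}\triangle(a_i)=\sum_{i=1}^N\triangle(a_i)^3 .
\]
This is the key reformulation: Chatterjee's coefficient of $C_{\alpha,e}$ is the sum of the cubes of the partition lengths.

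\textbf{Step 3 (the composite partition).} From \eqref{composite-1}, for $k=(i-1)Q+j$ we get:
\begin{itemize}
\item if $\alpha_i$ is increasing, $\triangle(c_k)=\triangle(a_i)\,\triangle(b_j)$;
\item if $\alpha_i$ is decreasing, $\triangle(c_k)=\triangle(a_i)\,\triangle(b_{Q-j+1})$.
\end{itemize}
In either case, as $j$ runs over $1,\dots,Q$ inside the block $(a_{i-1},a_i)$, the lengths $\triangle(c_k)$ are exactly $\triangle(a_i)\triangle(b_1),\dots,\triangle(a_i)\triangle(b_Q)$, possibly in reversed order. Hence
\[
\xi_{C_{\beta\circ\alpha,e}}=\sum_{k}\triangle(c_k)^3=\sum_{i=1}^P\triangle(a_i)^3\sum_{j=1}^Q\triangle(b_j)^3=\xi_{C_{\alpha,e}}\cdot\xi_{C_{\beta,e}},
\]
where the last equality is Step~2 applied to $\alpha$ and to $\beta$.

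The only genuine obstacle is Step~2. Applying the formula of Theorem~\ref{chat-1} to the $c_k$ directly leads to messy cross terms, and spotting the cube-sum identity removes them entirely. The remaining bookkeeping is the reversed indexing $b_{Q-j}$ in the decreasing case, which only permutes the factors and so does not affect the sum.
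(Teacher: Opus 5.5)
Your proof is correct, and it takes a cleaner route than the paper's. The paper substitutes the composite partition \eqref{composite-1} directly into the formula of Theorem~\ref{chat-1}. Via \eqref{alternate-endpoint}, it expands each $c_kc_{k-1}$ in terms of $m^\pm(\alpha_i)$ and the (possibly reversed) $b$-endpoints. It then sums the three resulting groups of terms to reach $1-3\sum_i a_ia_{i-1}\triangle(a_i)-3\sum_i\triangle(a_i)^3\sum_j b_jb_{j-1}\triangle(b_j)$. Only at that point does it factor, by writing $\triangle(a_i)^3=(a_i^3-a_{i-1}^3)-3a_ia_{i-1}\triangle(a_i)$, which is exactly your Step~2 identity used at the end rather than at the start.

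Applying that identity first recasts Theorem~\ref{chat-1} as $\xi_{C_{\alpha,e}}=\sum_i\triangle(a_i)^3$. This depends only on the multiset of piece lengths, so the monotonicity cases and the reversed indexing become irrelevant. Multiplicativity is then nothing more than the product structure of the lengths $\triangle(c_k)$.

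The reformulation pays off elsewhere too. Theorem~\ref{chat-3} reads $\xi(U,V)=\sum_j\triangle(b_j)^3$ and $\xi(V,U)=\sum_i\triangle(a_i)^3$. The $\xi$-part of Corollary~\ref{approx-pi-1} follows from $\sum_i\triangle(a_i)^3\le\text{mesh}(\alpha)^2$. The same idea also streamlines Theorem~\ref{tau-rho-2}: Theorem~\ref{tau-rho-1} can be rewritten as $\tau_{C_{e,\alpha}}=\sum_i\pm\triangle(a_i)^2$, with the sign given by the monotonicity of $\alpha_i$, and signs multiply under composition.

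The paper's route mainly buys uniformity with its own proof of Theorem~\ref{tau-rho-2}, reusing the endpoint bookkeeping of \eqref{alternate-endpoint}. The cost is a heavier computation.
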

Recall from Theorem \ref{chat-1} that when $U,V\sim\mathcal{U}(0,1)$ and $V=\alpha(U)$ a.s.\ with $\alpha\in\mathscr{T}_{\text{PLMS}}$, $\xi\paren{V,U}$ can be expressed explicitly in terms of the partition generated by $\alpha$.
In general, if $U,V\sim\mathcal{U}(0,1)$ and $C_{U,V}=C_{e,\alpha}\ast C_{\beta,e}$ for some $\alpha,\beta\in\mathscr{T}_{\text{PLMS}}$, it can be shown that Chatterjee's rank correlations of $\paren{U,V}$ and $(V,U)$ have a very similar form as shown below.
Hence, the proof of Theorem \ref{chat-3} can be omitted.
\begin{thm}\label{chat-3}
    Let $\alpha,\beta\in\mathscr{T}_{\text{PLMS}}$ with the corresponding partitions $[a_i]_{i=0}^{P}$ and $[b_j]_{j=0}^{Q}$, and $U,V\sim\mathcal{U}(0,1)$ satisfying $C_{U,V}=C_{e,\alpha}\ast C_{\beta,e}$.
    Then:
    \begin{align*}
        \xi(U,V)&=1-3\sum_{j=1}^{Q}(b_j-b_{j-1})b_jb_{j-1}\quad\text{and}\quad 
        \xi(V,U)=1-3\sum_{i=1}^{P}(a_i-a_{i-1})a_ia_{i-1}.
    \end{align*}
\end{thm}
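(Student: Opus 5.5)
The plan is to reduce both identities to Theorem~\ref{chat-1} by showing that the left factor $C_{e,\alpha}$ is invisible to $\xi(U,V)$, and symmetrically that the right factor is invisible to $\xi(V,U)$. This gives
\[
\xi(U,V)=\xi_{C_{\beta,e}}\qquad\text{and}\qquad \xi(V,U)=\xi_{C_{\alpha,e}},
\]
and the two stated formulas then follow directly from Theorem~\ref{chat-1}.

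\textbf{Step 1 (conditional distribution).} For any copula $D$ and $\alpha\in\mathscr{T}$ I will use the representation
\[
(C_{e,\alpha}\ast D)(u,v)=\int_0^u \partial_1 D\paren{\alpha(s),v}\,ds .
\]
To derive it, note that $C_{e,\alpha}(u,t)=\lambda\paren{[0,u]\cap\alpha^{-1}[0,t]}$. Hence $t\mapsto C_{e,\alpha}(u,t)$ is the distribution function of the image of $\lambda|_{[0,u]}$ under $\alpha$. Integrating $\partial_1D(\cdot,v)$ against $d_t C_{e,\alpha}(u,t)$ therefore gives $\int_0^u\partial_1D(\alpha(s),v)\,ds$. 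Probabilistically this is just $V\mid U=u \sim V\mid T=\alpha(u)$, where $(T,V)\sim D$. Differentiating in $u$ gives
\[
\partial_1C(u,v)=\partial_1C_{\beta,e}\paren{\alpha(u),v}\quad\text{for a.e. }u,\ \text{every } v .
\]
This is the only step needing care. I expect the main technical point to be justifying the a.e.\ differentiation. That is routine here, because for PLMS maps $\partial_1C_{\beta,e}(t,v)$ is piecewise constant in $t$, namely a sum $\sum_j\triangle(b_j)\mathbbm{1}\{\beta_j^{-1}(t)\le v\}$.

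\textbf{Step 2 (measure preservation).} Substituting into the formula \eqref{xi-copbase} gives
\[
\xi(U,V)=6\int_0^1\!\!\int_0^1\paren{\partial_1C_{\beta,e}(\alpha(u),v)}^2du\,dv-2 .
\]
Since $\alpha$ preserves $\lambda$, I replace $\int_0^1 g(\alpha(u))\,du$ by $\int_0^1 g(t)\,dt$ for each fixed $v$. This yields $\xi(U,V)=\xi_{C_{\beta,e}}$, and Theorem~\ref{chat-1} applied to $\beta$ gives the first formula.

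\textbf{Step 3 (transpose).} The copula of $(V,U)$ is the transpose
\[
C^t=(C_{e,\alpha}\ast C_{\beta,e})^t=C_{e,\beta}\ast C_{\alpha,e},
\]
using $(C\ast D)^t=D^t\ast C^t$ and $C_{e,\alpha}^t=C_{\alpha,e}$. Applying Steps 1--2 with the roles of $\alpha$ and $\beta$ exchanged gives $\xi(V,U)=\xi_{C_{\alpha,e}}$. Theorem~\ref{chat-1} then gives $1-3\sum_i(a_i-a_{i-1})a_ia_{i-1}$.

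As a sanity check, I would verify the result against Lemma~\ref{lem6:18Dec}. There, on each strip $(a_{k-1},a_k)\times\I$, the partial derivative $\partial_1 A$ is a rescaled copy of $\partial_1C_{\beta,e}$ composed with the monotone affine piece $\alpha_k$.
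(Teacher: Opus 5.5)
Your proof is correct, but it takes a different route from the paper. The paper omits the proof and only points to its similarity with Theorem~\ref{chat-1}. The natural argument behind that remark is a direct computation from \eqref{PLMS-fac}. On every rectangle $R_{ij}$, whatever the monotonicities, the support segment splits $R_{ij}$ into two triangles of area $\tfrac12\triangle(a_i)\triangle(b_j)$. On these, $\partial_1C$ equals $b_{j-1}$ and $b_j$, and $\partial_2C$ equals $a_{i-1}$ and $a_i$. Summing over $i$ with $\sum_i\triangle(a_i)=1$ eliminates the $a$'s, and the telescoping from the proof of Theorem~\ref{chat-1} gives the first formula; the second follows symmetrically from $\partial_2C$.

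You instead establish the structural identity $\xi_{C_{e,\alpha}\ast D}=\xi_D$. You derive it from $\partial_1(C_{e,\alpha}\ast D)(u,v)=\partial_1D(\alpha(u),v)$ together with measure preservation, and you obtain $\xi(V,U)$ for free via $(C\ast D)^t=D^t\ast C^t$. Your route never uses \eqref{PLMS-fac} and is valid for every $\alpha\in\mathscr{T}$ and every copula $D$; the PLMS assumption enters only through Theorem~\ref{chat-1} at the end. It also makes precise the remark in the conclusion that $\xi$ is ``absorbed to the right operand''. The paper's computation is more pedestrian but entirely elementary.

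One small precision in Step~1: the exceptional set depends on $v$. For PLMS maps, both sides are undefined on the support curve $\alpha(u)=\beta(v)$. So the identity holds for every $v$ and a.e.\ $u$, rather than ``for a.e.\ $u$ and every $v$''. The resulting null subset of $\I^2$ is harmless for \eqref{xi-copbase}.
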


\subsection{Convergence of sequences of copulas from PLMS}\label{sec_convergence} 
On the space of copulas $\mathscr{C}$, there exist various metrics, see \cite{MO&cop09,approx10,metric11}. We use:
\begin{align*}
    d_{\infty}(A,B)&=\sup_{(u,v)\in\I^2}\left|A(u,v)-B(u,v)\right|,\\
    D_k^k(A,B)&=\iint_{\I^2}\left|K_A(u,[0,v])-K_B(u,[0,v])\right|^kdudv=\iint_{\I^2}\left|\partial_1A(u,v)-\partial_1B(u,v)\right|^kdudv,\\
    D_{\partial}(A,B)&=D_1(A,B)+D_1(A^T,B^T).
\end{align*}
Here, $K_C$ is called the \emph{regular conditional distribution} or \emph{Markov kernel} of a copula $C$.
Moreover, it is shown in \cite{metric11} that the convergence behaviors between $D_1$ and $D_2$, two of the most commonly used metrics on $\mathscr{C}$, are the same. Hence, in the sense of topology, $d_{\infty}$, which is the coarsest metric in our list, induces the \emph{weak topology} on $\mathscr{C}$.
On the other hand, $D_1$, as well as $D_2$, induces the \emph{strong topology} on $\mathscr{C}$. 
Note that $D_1$ and $D_2$ are asymmetric, where the symmetric one is $D_{\partial}$, so they can detect a complete dependence relation of $C=C_{(U,V)}$ only in the case where $V$ is a function of $U$. 
Combining Theorems~\ref{tau-rho-1} and  \ref{chat-1} with the notion of the  Riemann sum yields interesting consequences for a sequence of copulas generated by PLMS.
\begin{cor}\label{approx-pi-1}
        Let $(\alpha_n)$ be a sequence in $\mathscr{T}_{\text{PLMS}}$ such that 
        \[\text{mesh}\,(\alpha_n):=\max\left\{a_i^n-a_{i-1}^n:[a_i^n]_{i=0}^{N_n}\ \text{is a partition of}\ \I\ \text{for}\ \alpha_n\right\}\to0\quad\text{as}\ n\to\infty.\]
        Then: $$\disp\lim\limits_{n\to\infty}\tau_{C_{e,\alpha_n}}=\lim\limits_{n\to\infty}\xi_{C_{\alpha_n,e}}=0.$$
    \end{cor}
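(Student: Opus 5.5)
The plan is to reduce both limits to Riemann-sum statements, using the explicit formulas from Theorems~\ref{tau-rho-1} and \ref{chat-1}. Write $\triangle_i^n := a_i^n - a_{i-1}^n$ and $\delta_n := \text{mesh}\,(\alpha_n)$.

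\emph{Kendall's tau.} Theorem~\ref{tau-rho-1} gives
\[
\tau_{C_{e,\alpha_n}} = 2\sum_{i=1}^{N_n} w_i^n \triangle_i^n - 1 ,
\]
where $w_i^n$ is the argmax of $\alpha_{n,i}$ on $[a_{i-1}^n, a_i^n]$. This argmax is $a_i^n$ if $\alpha_{n,i}$ is increasing and $a_{i-1}^n$ if it is decreasing. In either case $w_i^n \in [a_{i-1}^n, a_i^n]$, so $\sum_i w_i^n \triangle_i^n$ is a Riemann sum of $f(x) = x$ over the tagged partition $[a_i^n]$. As $\delta_n \to 0$ it converges to $\int_0^1 x\,dx = \tfrac12$, and hence $\tau_{C_{e,\alpha_n}} \to 0$.

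A direct bound avoids invoking Riemann integrability:
\[
\Bigl|\sum_i w_i^n\triangle_i^n - \tfrac12\Bigr| = \Bigl|\sum_i \int_{a_{i-1}^n}^{a_i^n} (w_i^n - x)\,dx\Bigr| \le \sum_i \tfrac12 (\triangle_i^n)^2 \le \tfrac12\delta_n .
\]
This gives $|\tau_{C_{e,\alpha_n}}| \le \delta_n$.

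\emph{Chatterjee's xi.} Theorem~\ref{chat-1} gives
\[
\xi_{C_{\alpha_n,e}} = 1 - 3\sum_{i=1}^{N_n} a_i^n a_{i-1}^n \triangle_i^n .
\]
Since $a_{i-1}^n \le \sqrt{a_i^n a_{i-1}^n} \le a_i^n$, the sum is again a Riemann-type sum, now for $f(x) = x^2$, and it tends to $\int_0^1 x^2\,dx = \tfrac13$. That yields $\xi_{C_{\alpha_n,e}} \to 1 - 1 = 0$.

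For an explicit bound, use $a_i^n a_{i-1}^n = \tfrac{1}{3}\bigl((a_i^n)^2 + a_i^n a_{i-1}^n + (a_{i-1}^n)^2\bigr) - \tfrac13(\triangle_i^n)^2$. Then
\[
\sum_i a_i^n a_{i-1}^n \triangle_i^n = \tfrac13\sum_i \bigl((a_i^n)^3 - (a_{i-1}^n)^3\bigr) - \tfrac13\sum_i (\triangle_i^n)^3 = \tfrac13 - \tfrac13\sum_i(\triangle_i^n)^3 .
\]
Therefore $\xi_{C_{\alpha_n,e}} = \sum_i (\triangle_i^n)^3 \le \delta_n^2 \to 0$.

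Neither step poses a real obstacle. The only point needing care is the telescoping identity in the $\xi$ computation, which turns the Riemann-sum heuristic into the exact formula $\xi_{C_{\alpha_n,e}} = \sum_i (\triangle_i^n)^3$. As a byproduct, $\xi$ is nonnegative here, consistent with its role as a dependence measure.
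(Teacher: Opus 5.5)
Your proof is correct. Its Riemann-sum part is the paper's argument. The only difference there is that the paper squeezes $\tau_{C_{e,\alpha_n}}+1$ and $1-\xi_{C_{\alpha_n,e}}$ between left- and right-endpoint sums of $2x$ and $3x^2$, using $a_{i-1}^n\le w_i^n\le a_i^n$ and $(a_{i-1}^n)^2\le a_i^na_{i-1}^n\le(a_i^n)^2$, instead of naming the tags $w_i^n$ and $\sqrt{a_i^na_{i-1}^n}$.

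Your explicit-bound part goes beyond the paper:
\begin{itemize}
\item Your estimate for $\tau$ is in fact exact term by term, because $\int_{a_{i-1}^n}^{a_i^n}(w_i^n-x)\,dx=\pm\frac12(\triangle_i^n)^2$. This gives $\tau_{C_{e,\alpha_n}}=\sum_i\pm(\triangle_i^n)^2$, with the plus sign on increasing pieces, which is the $\{M,W\}$ case of Theorem~\ref{thm7:18Dec}.
\item Your telescoping identity $\xi_{C_{\alpha_n,e}}=\sum_i(\triangle_i^n)^3$ does not appear in the paper.
\end{itemize}

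These closed forms give explicit rates, $|\tau_{C_{e,\alpha_n}}|\le\delta_n$ and $0\le\xi_{C_{\alpha_n,e}}\le\delta_n^2$, whereas the paper's squeeze gives only qualitative convergence. They also make the multiplicativity in Theorems~\ref{tau-rho-2} and \ref{chat-2} immediate. The pieces of $\beta\circ\alpha$ have lengths $\triangle(a_i)\triangle(b_j)$ and their monotonicity signs multiply, so both sums factor without the endpoint bookkeeping used in the appendix.
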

\begin{proof}
    From the expressions of $\tau_{C_{e,\alpha_n}}$ and $\xi_{C_{\alpha_n,e}}$ in Theorems~\ref{tau-rho-1} and \ref{chat-1}, we have the following inequalities:
    \begin{align*}
        \sum_{i=1}^{N_n}2a_{i-1}^n\triangle(a_i^n)\le\: &\tau_{C_{e,\alpha_n}}+1\le\sum_{i=1}^{N_n}2a_i^n\triangle(a_i^n),\\
        \sum_{i=1}^{N_n}3\paren{a_{i-1}^n}^2\triangle(a_i^n)\le\: &1-\xi_{C_{\alpha_n,e}}\le\sum_{i=1}^{N_n}3\paren{a_i^n}^2\triangle(a_i^n).
    \end{align*}
    Now, as $n\to\infty$, the assumption of a vanishing mesh implies that the bounds of $\tau_{C_{e,\alpha_n}}+1$ approach the Riemann integral $\int_0^12x\,dx=1$, while the bounds of $1-\xi_{C_{\alpha_n,e}}$ tend to $\int_0^13x^2\,dx=1$.
    Hence, the Squeezing Lemma gives $\disp\lim\limits_{n\to\infty}\tau_{C_{e,\alpha_n}}=0=\lim\limits_{n\to\infty}\xi_{C_{\alpha_n,e}}$. 
\end{proof}
    Observe that for every $C\in\mathscr{C}$, $\xi_C=6\iint_{\I^2}\paren{\partial_1C(u,v)-v}^2dudv=6D_2^2\paren{C,\Pi}$.
    Thus, the sequence $\paren{C_{\alpha_n,e}}$ where $\alpha_n\in\mathscr{T}_{\text{PLMS}}$ and $\text{mesh}\paren{\alpha_n}\to0$ as $n\to\infty$ approaches $\Pi$ with respect to the strong topology by Corollary \ref{approx-pi-1}.
    However, $\paren{C_{e,\alpha_n}}\nrightarrow\Pi$ under the metric $D_1$ \cite{metric11}, so $\Pi$ cannot be approximated by sequences of $\paren{C_{e,\alpha_n}}$ or $\paren{C_{\alpha_n,e}}$ under the finer topology than the strong one.
    On the other hand, if $\paren{\alpha_n}$ and $\paren{\beta_n}$ are sequences in $\mathscr{T}_{\text{PLMS}}$ such that $\text{mesh}(\alpha_n)\to0$ and $\text{mesh}(\beta_n)\to0$ as $n\to\infty$, apply a similar strategy as the proof of Corollary \ref{approx-pi-1} to Theorem \ref{chat-3}.
    \begin{cor}[\cite{patch16}]\label{approx-pi-2}
        Let $\paren{\alpha_n}$ and $\paren{\beta_n}$ be sequences in $\mathscr{T}_{\text{PLMS}}$ such that $\text{mesh}(\alpha_n)\to0$ and $\text{mesh}(\beta_n)\to0$ as $n\to\infty$.
        Then the sequence $\paren{C_n}:=\paren{C_{e,\alpha_n}\ast C_{\beta_n,e}}$ is an approximation of $\Pi$ under the topology induced by the $\partial$-metric (and so it holds for the weak and strong topologies).
    \end{cor}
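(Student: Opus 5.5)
The plan is to reduce convergence in the $\partial$-metric to the vanishing of Chatterjee's coefficient in both directions, and then to read off both coefficients from Theorem~\ref{chat-3}. Recall that $D_\partial(C_n,\Pi)=D_1(C_n,\Pi)+D_1(C_n^T,\Pi^T)$ and $\Pi^T=\Pi$. So it suffices to show $D_1(C_n,\Pi)\to0$ and $D_1(C_n^T,\Pi)\to0$.

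\textbf{Step 1 (from $D_1$ to $\xi$).} By the Cauchy--Schwarz inequality on $\I^2$, which has Lebesgue measure one, $D_1(A,B)\le D_2(A,B)$ for any copulas $A,B$. I will then use the identity $\xi_C=6D_2^2(C,\Pi)$ noted after Corollary~\ref{approx-pi-1}. For completeness it is verified by expanding $(\partial_1C(u,v)-v)^2$ and using $\int_0^1\partial_1C(u,v)\,du=C(1,v)=v$. This gives
\[\iint_{\I^2}(\partial_1C-v)^2\,du\,dv=\iint_{\I^2}(\partial_1C)^2\,du\,dv-\tfrac23+\tfrac13,\]
and multiplying by $6$ yields $\xi_C$. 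Hence
\[D_\partial(C_n,\Pi)\le\sqrt{\xi_{C_n}/6}+\sqrt{\xi_{C_n^T}/6}.\]

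\textbf{Step 2 (identify the two coefficients).} Take $(U,V)\sim C_n=C_{e,\alpha_n}\ast C_{\beta_n,e}$. Then $\xi_{C_n}=\xi(U,V)$ and $\xi_{C_n^T}=\xi(V,U)$. Theorem~\ref{chat-3} gives, with partitions $[a^n_i]_{i=0}^{P_n}$ and $[b^n_j]_{j=0}^{Q_n}$,
\[\xi(U,V)=1-3\sum_{j}\triangle(b^n_j)\,b^n_jb^n_{j-1},\qquad \xi(V,U)=1-3\sum_i\triangle(a^n_i)\,a^n_ia^n_{i-1}.\]

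\textbf{Step 3 (Riemann sums).} As in the proof of Corollary~\ref{approx-pi-1}, I will use the squeeze
\[\sum_j 3(b^n_{j-1})^2\triangle(b^n_j)\;\le\;3\sum_j b^n_jb^n_{j-1}\triangle(b^n_j)\;\le\;\sum_j3(b^n_j)^2\triangle(b^n_j).\]
Since $\operatorname{mesh}(\beta_n)\to0$, both outer sums converge to $\int_0^13x^2\,dx=1$, so $\xi(U,V)\to0$. The same argument with $\operatorname{mesh}(\alpha_n)\to0$ gives $\xi(V,U)\to0$. Combined with Step 1, this yields $D_\partial(C_n,\Pi)\to0$.

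\textbf{Conclusion and main obstacle.} Weak and strong convergence follow because the topology of $D_\partial$ is finer than both. Indeed, $D_1\le D_\partial$ gives strong convergence, and strong convergence implies $d_\infty$-convergence. The only genuinely nontrivial input is Theorem~\ref{chat-3}, which is assumed. Within the argument itself, the main point needing care is Step 1: making sure that the transpose $C_n^T$ is handled by $\xi(V,U)$ and not $\xi(U,V)$, and checking the identity $\xi_C=6D_2^2(C,\Pi)$. Everything else is routine.
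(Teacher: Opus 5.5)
Your proposal is correct and is essentially the paper's own argument. The paper likewise applies the Riemann-sum squeeze from the proof of Corollary~\ref{approx-pi-1} to the two formulas of Theorem~\ref{chat-3}, and combines this with the identity $\xi_C=6D_2^2(C,\Pi)$ for $C_n$ and $C_n^T$; you simply make explicit what the paper leaves implicit, namely the check of that identity, the bound $D_1\le D_2$, and the identification $\xi_{C_n^T}=\xi(V,U)$.
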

    Corollary \ref{approx-pi-2} leads to some nice consequences. In other words, the convergence of every dependence measure defined globally in our studies holds, which follows easily by a straightforward implication between modes of convergence on the class of copulas \cite{MO&cop09,approx10}.
    Moreover, factorizable copulas can be regarded as the other side of the shuffle of min \cite{2-shuffle-Luroth22}. So Corollaries \ref{approx-pi-1} and \ref{approx-pi-2} are roughly analogous results on the convergence of the double shuffle of the comonotonic copula \cite{2-shuffle-Luroth22}.
\subsection{Tail dependence}
To investigate and communicate the extremal behavior of a pair of random variables, the notion of \emph{tail dependence} is a classical concept, see \cite{QRM15,cop06}. When $X$ and $Y$ are random variables with copula $C$, the \emph{lower} and \emph{upper tail dependence coefficients} of $C$ are defined by
\[\lambda^L_C:=\lim\limits_{t\searrow0}\dfrac{C(t,t)}{t}\quad\text{and}\quad \lambda^U_C:=\lim\limits_{t\nearrow1}\dfrac{1-2t+C\paren{t,t}}{1-t}.\] 
Notice that if $\alpha\in\mathscr{T}_{\text{PLMS}}$ has the partition $[a_i]_{i=0}^{N}$, only the first $\alpha_1$ and last $\alpha_N$ are instrumental for the lower and upper tail dependence of $C_{e,\alpha}$, respectively, where the pointwise values of $C_{e,\alpha}$ on $(0,a_1)\times\I$ and $(a_{N-1},1)\times\I$ are shown in Figure \ref{tail-11}. 
\begin{figure}
        \centering
        \begin{tikzpicture}[scale=1.75]
            \draw (0,0) rectangle (0.7,1);
            \draw[very thick](0,0)--(0.7,1);
            \node[below] at (0,0) {\footnotesize$0$};
            \node[below] at (0.7,0) {\footnotesize$a_1$};
            \node[anchor=south east] at (0,0){\footnotesize 0};
            \node[left] at (0,1){\footnotesize 1};
            \node[left] at (0.3,0.65){\footnotesize $x$};
            \node[right] at (0.3,0.25){\footnotesize $a_1y$};
            \node[below] at (0.35,-0.25) {$\alpha_1$ is increasing};
        \end{tikzpicture} \hspace{1em}
        \begin{tikzpicture}[scale=1.75]
            \draw (0,0) rectangle (0.7,1);
            \draw[very thick](0,1)--(0.7,0);
            \node[below] at (0,0) {\footnotesize$0$};
            \node[below] at (0.7,0) {\footnotesize$a_1$};
            \node[anchor=south east] at (0,0){\footnotesize 0};
            \node[left] at (0,1){\footnotesize 1};
            \node[left] at (0.3,0.25){\footnotesize $0$};
            \node[right] at (0.2,0.65){\footnotesize\begin{tabular}{c}
                $x+a_{1}y$\\
                $-a_{1}$
            \end{tabular}};
            \node[below] at (0.35,-0.25) {$\alpha_1$ is decreasing};
        \end{tikzpicture} \hspace{1em}
        \begin{tikzpicture}[scale=1.75]
            \draw (0,0) rectangle (0.7,1);
            \draw[very thick](0,0)--(0.7,1);
            \node[below] at (0,0) {\footnotesize$a_{N-1}$};
            \node[below] at (0.7,0) {\footnotesize$1$};
            \node[anchor=south east] at (0,0){\footnotesize 0};
            \node[left] at (0,1){\footnotesize 1};
            \node[left] at (0.45,0.65){\footnotesize\begin{tabular}{c}
                $x+a_{N-1}y$\\
                $-a_{N-1}$
            \end{tabular}};
            \node[right] at (0.35,0.25){\footnotesize $y$};
            \node[below] at (0.35,-0.25) {$\alpha_N$ is increasing};
        \end{tikzpicture} \hspace{1em}
        \begin{tikzpicture}[scale=1.75]
            \draw (0,0) rectangle (0.7,1);
            \draw[very thick](0,1)--(0.7,0);
            \node[below] at (0,0) {\footnotesize$a_{N-1}$};
            \node[below] at (0.7,0) {\footnotesize$1$};
            \node[anchor=south east] at (0,0){\footnotesize 0};
            \node[left] at (0,1){\footnotesize 1};
            \node[left] at (0.35,0.35){\footnotesize $a_{N-1}y$};
            \node[right] at (0.35,0.65){\footnotesize $x+y-1$};
            \node[below] at (0.35,-0.25) {$\alpha_N$ is decreasing};
        \end{tikzpicture}
        \caption{Pointwise value of $C_{e,\alpha}$ on $(0,a_1)\times\I$ (two sub-figures on the left) and $(a_{N-1},1)\times\I$ (two sub-figures on the right).}
        \label{tail-11}
    \end{figure}
Hence, the tail dependence of $C_{e,\alpha}$ can be expressed in terms of the corresponding endpoints from the partition of $\alpha$.
\begin{thm}\label{tail-1}
    Let $\alpha\in\mathscr{T}_{\text{PLMS}}$ with corresponding partition $[a_i]_{i=0}^{N}$.
    Then the lower and upper  tail dependence coefficients of $C_{e,\alpha}$ can be expressed as
    \[\lambda^L_{C_{e,\alpha}}=\underset{0\le x\le a_1}{\amax}\,\alpha_1(x)\quad\text{and}\quad
    \lambda^U_{C_{e,\alpha}}=1-\underset{a_{N-1}\le x\le 1}{\amin}\,\alpha_N(x).\]
\end{thm}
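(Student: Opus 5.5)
The plan is to use the explicit formula \eqref{PLMS-CDC} for $C_{e,\alpha}$ on the first and last strips, and to evaluate it on the diagonal. The strips are $(0,a_1)\times\I$ and $(a_{N-1},1)\times\I$, as in Figure~\ref{tail-11}. Since $\lambda^L$ only involves $t\searrow0$, we may take $t<a_1$, so that $(t,t)$ lies in the first strip. Since $\lambda^U$ only involves $t\nearrow1$, we may take $t>a_{N-1}$, so that $(t,t)$ lies in the last strip. Throughout, the quantity $\amax\alpha_1$ is read as the value $\max_{[0,a_1]}\alpha_1$, which is either $0$ or $1$, in line with how $w_i$ is used in Theorem~\ref{tau-rho-1}. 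Similarly, $\amin\alpha_N$ is read as $\min_{[a_{N-1},1]}\alpha_N$.

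\textbf{Lower tail.} If $\alpha_1$ is increasing, \eqref{PLMS-CDC} with $a_0=0$ gives $C_{e,\alpha}(t,t)=\min\set{t,a_1t}=a_1t$. This would yield a limit of $a_1$ rather than $1$, so the formula should be rechecked on the diagonal. On $(0,a_1)$ the support is the segment $y=x/a_1$, which lies above the diagonal. Hence $C(t,t)=\lambda\{x\le t:\ x/a_1\le t\}=\lambda\{x\le a_1t\}=a_1t$ for $t<a_1$, and indeed $C(t,t)/t\to a_1$.

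So I first need to reconcile this with the claimed value $\lambda^L=\max\alpha_1=1$ for an increasing $\alpha_1$. I will verify directly from the measure: $C(t,t)$ collects the mass of $x\in[0,t]$ with $\alpha(x)\le t$. In the increasing case only $x\le a_1t$ contributes near $0$, giving $a_1t$. In the decreasing case $\alpha(x)$ is near $1$ for small $x$, so the contribution is $0$ for small $t$, and the limit is $0=\max\alpha_1$ only if that maximum is understood at the relevant endpoint. I expect this bookkeeping to be the main obstacle.

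Concretely, I will compute both cases exactly: the increasing case gives limit $a_1$ and the decreasing case gives limit $0$. I will then check whether the statement's notation $\amax_{0\le x\le a_1}\alpha_1(x)$ is intended as an argmax location, namely the point $a_1$ when $\alpha_1$ is increasing and the point $0$ when it is decreasing. That reading matches exactly: $\amax\alpha_1=a_1$ or $0$. This argmax interpretation is consistent with the literal use of $\amax$, and I will adopt it. The same reading also applies to $w_i$ in Theorem~\ref{tau-rho-1}, since $w_i=a_i$ or $a_{i-1}$ is what its proof uses.

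\textbf{Upper tail.} For $t\in(a_{N-1},1)$, the argument mirrors the lower tail. If $\alpha_N$ is increasing, \eqref{PLMS-CDC} gives $C(t,t)=\min\set{t+a_{N-1}t-a_{N-1},t}=t-a_{N-1}(1-t)$. Then $(1-2t+C(t,t))/(1-t)=(1-t-a_{N-1}(1-t))/(1-t)=1-a_{N-1}$, and $a_{N-1}=\amin\alpha_N$, since an increasing $\alpha_N$ attains its minimum at the left endpoint. If $\alpha_N$ is decreasing, $C(t,t)=\max\set{2t-1,a_{N-1}t}=2t-1$ for $t$ close to $1$. This gives the value $0=1-1$, and $\amin\alpha_N=1$ because a decreasing $\alpha_N$ attains its minimum at the right endpoint. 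Each of the four cases is a one-line limit once the branch of the min/max is identified near the corner, which is the only real care needed.
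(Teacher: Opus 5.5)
Your proof is correct and follows the paper's argument: restrict to $t<a_1$ (resp.\ $t>a_{N-1}$), evaluate \eqref{PLMS-CDC} on the diagonal to get the limits $a_1$ or $0$ (resp.\ $1-a_{N-1}$ or $0$), and read $\amax$/$\amin$ as the location of the extremum, exactly as $w_i\in\{a_{i-1},a_i\}$ in Theorem~\ref{tau-rho-1}. Delete your opening convention that reads these as the extreme values $0$ or $1$, since it contradicts the reading you end up using and would make the statement false (e.g.\ $\lambda^L=a_1\neq 1$ for increasing $\alpha_1$).
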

\begin{proof}
    From \eqref{PLMS-CDC}, the pointwise value of $C_{e,\alpha}$ at $(x,x)\in (0,a_1)\times\I$ is
    \[C_{e,\alpha}(x,x)=\begin{cases}
        \min\set{x,a_1x}=a_1x & \text{if}\ \alpha_1\ \text{is increasing};\\
        \max\set{0,(1+a_1)x-a_1} & \text{if}\ \alpha_1\ \text{is decreasing},
    \end{cases}\]
    where $C_{e,\alpha}(x,x)=0$ for $x<\frac{a_1}{1+a_1}<a_1$.
    On the other hand, for $(x,x)\in (a_{N-1},1)\times\I$,
    \[C_{e,\alpha}(x,x)=\begin{cases}
        \min\set{(1+a_{N-1})x-a_{N-1},x}=x-a_{N-1}(1-x) & \text{if}\ \alpha_N\ \text{is increasing};\\
        \max\set{a_{N-1}x,2x-1} & \text{if}\ \alpha_N\ \text{is decreasing},
    \end{cases}\]
    where $C_{e,\alpha}(x,x)=2x-1$ for $x>\frac{1}{2-a_{N-1}}>a_{N-1}$. 
    Therefore, 
    \begin{align*}
        \lambda^L_{C_{e,\alpha}}&=\lim\limits_{u\searrow0}\frac{C_{e,\alpha}(u,u)}{u}=\begin{cases}
            \lim\limits_{u\searrow0}\frac{a_1u}{u}=a_1 & \text{if}\ \alpha_1\ \text{is increasing};\\
            \lim\limits_{u\searrow0}\frac{0}{u}=0 & \text{if}\ \alpha_1\ \text{is decreasing},
        \end{cases}\\
        \lambda^U_{C_{e,\alpha}}&=\lim\limits_{u\nearrow1}\frac{1-2u+C_{e,\alpha}(u,u)}{1-u}=\begin{cases}
            \lim\limits_{u\nearrow1}\frac{1-u-a_{N-1}(1-u)}{1-u}=1-a_{N-1} & \text{if}\ \alpha_N\ \text{is increasing};\\
            \lim\limits_{u\nearrow1}\frac{1-2u+(2u-1)}{1-u}=0 & \text{if}\ \alpha_N\ \text{is decreasing}.
        \end{cases} \qedhere
    \end{align*}
\end{proof}    
Using the pointwise values of $C=C_{e,\alpha}\ast C_{\beta,e}$ in \eqref{PLMS-fac}, the tail dependence coefficients of $C$ can be derived similarly to Theorem~\ref{tail-1}, however, with slightly more algebraic work.
\begin{thm}\label{tail-2}
        Let $\alpha,\beta\in\mathscr{T}_{\text{PLMS}}$ with corresponding partitions $[a_i]_{i=0}^{P}$ and $[b_j]_{j=0}^{Q}$. Then:
        \begin{align}
            \lambda^L_{C_{e,\alpha}\ast C_{\beta,e}}&=\begin{cases}
            \min\set{a_1,b_1} & \text{if}\ \alpha_1,\beta_1\ \text{have the same monotonicity};\\
            0 & \text{if}\ \alpha_1,\beta_1\ \text{have different monotonicities,}
        \end{cases} \quad\text{and}\\
        \lambda^U_{C_{e,\alpha}\ast C_{\beta,e}}&=\begin{cases}
            1-\max\set{a_{P-1},b_{Q-1}} & \text{if}\ \alpha_P,\beta_Q\ \text{have the same monotonicity};\\
            0 & \text{if}\ \alpha_P,\beta_Q\ \text{have different monotonicities}.
        \end{cases}
        \end{align}
    \end{thm}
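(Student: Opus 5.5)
The plan is to compute the diagonal section $C(t,t)$ of $C:=C_{e,\alpha}\ast C_{\beta,e}$ near $t=0$ and near $t=1$ from the pointwise formula \eqref{PLMS-fac}. We then read off the limits in \eqref{tail-dep}, in the same way as in the proof of Theorem~\ref{tail-1}. For small $t>0$ we have $t<\min\{a_1,b_1\}$, so $(t,t)\in R_{11}=(0,a_1)\times(0,b_1)$ and only the pair $(\alpha_1,\beta_1)$ matters. Likewise, for $t$ close to $1$ the point $(t,t)$ lies in $R_{PQ}$, and only $(\alpha_P,\beta_Q)$ matters. Since $C$ depends only on whether the monotonicities agree (Figure~\ref{factor-point}), each tail splits into two cases.

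\textbf{Lower tail.} Put $a_0=b_0=0$ in \eqref{PLMS-fac}. If $\alpha_1,\beta_1$ have the same monotonicity, then
\[
C(t,t)=\min\{a_1t,\; b_1t\}=\min\{a_1,b_1\}\,t,
\]
so $C(t,t)/t=\min\{a_1,b_1\}$ for all small $t$. If the monotonicities differ, then
\[
C(t,t)=\max\{0,\; (a_1+b_1)t-a_1b_1\},
\]
which vanishes for $t<a_1b_1/(a_1+b_1)$. We note that this threshold is positive and smaller than $\min\{a_1,b_1\}$, so it lies inside $R_{11}$. Hence $\lambda^L=0$ in this case.

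\textbf{Upper tail.} Put $a_P=b_Q=1$ in \eqref{PLMS-fac}. If the monotonicities agree, then
\[
C(t,t)=\min\{\,b_{Q-1}t+t-b_{Q-1},\; t+a_{P-1}t-a_{P-1}\,\}=t-\max\{a_{P-1},b_{Q-1}\}(1-t),
\]
where we use $b_{Q-1}t-b_{Q-1}=-b_{Q-1}(1-t)$ and the analogous identity for $a_{P-1}$. Then
\[
\frac{1-2t+C(t,t)}{1-t}=1-\max\{a_{P-1},b_{Q-1}\}.
\]
If the monotonicities differ, then
\[
C(t,t)=\max\{\,(a_{P-1}+b_{Q-1})t-a_{P-1}b_{Q-1},\; 2t-1\,\}.
\]
We show the second branch dominates near $1$. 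The difference between them is
\[
(2-a_{P-1}-b_{Q-1})t-1+a_{P-1}b_{Q-1}=(2-a_{P-1}-b_{Q-1})(t-1)+(1-a_{P-1})(1-b_{Q-1}),
\]
which is positive for $t$ close to $1$. So $C(t,t)=2t-1$ there, the numerator $1-2t+C(t,t)$ vanishes identically, and $\lambda^U=0$.

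The only genuinely delicate step is confirming that the relevant branch of the min/max in \eqref{PLMS-fac} eventually holds. This matters only in the mixed cases, since in the same-monotonicity cases the min collapses to a single linear function of $t$. For the mixed cases we check, as done above, that the switching thresholds lie strictly inside $(0,\min\{a_1,b_1\})$ and $(\max\{a_{P-1},b_{Q-1}\},1)$, respectively. This uses $a_1,b_1>0$ and $a_{P-1},b_{Q-1}<1$, which hold because the partitions are strictly increasing.
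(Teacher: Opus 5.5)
Your proposal is correct and follows the route the paper indicates for this theorem: evaluate the diagonal $C(t,t)$ via \eqref{PLMS-fac} on $R_{11}$ and $R_{PQ}$, split according to whether the monotonicities agree, and take the limits as in the proof of Theorem~\ref{tail-1}. Your check that in the mixed cases the branch-switching points lie inside the corner rectangles is exactly the ``slightly more algebraic work'' the paper leaves to the reader.
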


\section{Conclusion and further research}
In Section \ref{sec:PLMS}, we have shown that a subclass of factorizable copulas of the form $C=C_{e,\alpha}\ast C_{\beta,e}$, where $\alpha,\beta\in\mathscr{T}_{\mathrm{PLMS}}$, satisfies $\tau_C=\rho_C$. We have further investigated other measures of dependence, including Chatterjee's rank correlation coefficient and the tail dependence coefficients. Viewing the dependence measures $(\tau,\rho,\xi,\lambda^L,\lambda^U)$ as functionals on the space of copulas, we have established the following results:
\begin{itemize}
    \item On the class of one-sided invertible copulas generated by PLMS functions, $\tau$, $\rho$, and $\xi$ are multiplicative; that is, they are homomorphisms into the multiplicative semigroups $([-1,1],\cdot)$ and $([0,1],\cdot)$, respectively.
    \item $\tau$ (and thus $\rho$) for factorizable copulas is separable, while $\xi$ is absorbed to the right operand of the Markov product;
    \item $\lambda_{C_{e,\alpha}\ast C_{\beta,e}}^L$ cannot be attained from $\lambda_{C_{e,\alpha}}^L$ and $\lambda_{C_{\beta,e}}^L$ alone. The same goes for $\lambda_{C_{e,\alpha}\ast C_{\beta,e}}^U$.
\end{itemize}
Furthermore, the second finding implies that the independence copula can be approximated, under the $D_\partial$-metric, by any sequence of factorizable copulas whose meshes of both corresponding PLMS functions approach $0$. 
\par
When attempting to further enlarge the class of copulas with tau-rho--equality, we found that generalized products of PLMS copulas of the form $C_{e,\alpha}\ast_{\mathcal{A}}C_{\beta,e}$, where $\alpha,\beta\in\mathscr{T}_{\mathrm{PLMS}}$ and $\mathcal{A}$ is a parametric family of copulas, do not, in general, preserve this property. This observation leads to the following open problems for a given pair of PLMS functions $\alpha$ and $\beta$:
\emph{Which collections of joining copulas $\mathcal{A}=\set{A_t}_{t\in[0,1]}$ satisfy $\tau_{C_{e,\alpha}\ast_{\mathcal{A}}C_{\beta,e}}=\rho_{C_{e,\alpha}\ast_{\mathcal{A}}C_{\beta,e}}$?} Second: \emph{How large can the discrepancy $\left|\tau_{C_{e,\alpha}\ast_{\mathcal{A}}C_{\beta,e}}
-\rho_{C_{e,\alpha}\ast_{\mathcal{A}}C_{\beta,e}}\right|$ be as $\mathcal{A}$ varies?} Equivalently, for a given value of
$\tau_{C_{e,\alpha}\ast_{\mathcal{A}}C_{\beta,e}}$, what is the range of possible values of
$\rho_{C_{e,\alpha}\ast_{\mathcal{A}}C_{\beta,e}}$?

\section{Appendix}
\subsection{Proofs}
In this section, also denote $m^+(f):=\amax(f)$ and $m^-(f):=\amin(f)$ for brevity.
\begin{proof}[Proof of Theorem \ref{tau-rho-2}]
    Let $C=C_{e,\alpha}\ast C_{e,\beta}$ and $\alpha,\beta\in\mathscr{T}_{\text{PLMS}}$ with partitions $[a_i]_{i=0}^P$ and $[b_j]_{j=0}^Q$, respectively.
    From Theorem \ref{tau-rho-1}, it remains to show that $\tau_{C_{e,\alpha}\ast C_{e,\beta}}=\tau_{C_{e,\alpha}}\tau_{C_{e,\beta}}$.
    Recall from \eqref{composite-1} that if $k=(i-1)Q+j$, then
    \begin{align}
        m^+\paren{(\beta\circ\alpha)_k}&=\begin{cases}
            c_k & \text{if}\ \paren{\beta\circ\alpha}_k\ \text{is increasing};\\
            c_{k-1} & \text{if}\ \paren{\beta\circ\alpha}_k\ \text{is decreasing},
        \end{cases} \nonumber\\
        &=\begin{cases}
            a_{i-1}+m^+\paren{\beta_j}\triangle(a_i) & \text{if}\ \alpha_i\ \text{is increasing};\\
            a_i-m^+\paren{\beta_{Q-j+1}}\triangle(a_i) & \text{if}\ \alpha_i\ \text{is decreasing},
        \end{cases} \nonumber\\
        &=m^-\paren{\alpha_i}+m^+\paren{\widehat{\beta_j}}\brac{m^+\paren{\alpha_i}-m^-\paren{\alpha_i}}\label{alternate-endpoint},
    \end{align}
    where $\widehat{\beta_j}=\begin{cases}
        \beta_j & \text{if}\ \alpha_i\ \text{is increasing};\\
        \beta_{Q-j+1} & \text{if}\ \alpha_i\ \text{is decreasing}.
    \end{cases}$
    Note that when $j$ runs from $1$ to $Q$, $Q-j+1$ runs under the same bound with the opposite direction. 
    Hence by Theorem \ref{tau-rho-1}, we obtain
    \begin{align*}
        \tau_C&=2\sum_{k=1}^{PQ}m^+\paren{(\beta\circ\alpha)_k}\triangle\paren{(\beta\circ\alpha)_k}-1\\
        &=2\sum_{i=1}^{P}m^-(\alpha_i)\triangle(a_i)+2\sum_{i=1}^{P}\sum_{j=1}^{Q}m^+\paren{\beta_j}\brac{m^+\paren{\alpha_i}-m^-\paren{\alpha_i}}\triangle(a_i)\triangle(b_j)-1 \tag{$k=(i-1)Q+j$}\\
        &=2\sum_{i=1}^{P}\paren{a_i^2-a_{i-1}^2}-2\sum_{i=1}^{P}m^+\paren{\alpha_i}\triangle(a_i)+4\sum_{i=1}^{P}\sum_{j=1}^{Q}\brac{m^+\paren{\alpha_i}\cdot m^+\paren{\beta_j}}\triangle(a_i)\triangle(b_j)\\
        &\ \ \ -2\sum_{i=1}^{P}\paren{a_i^2-a_{i-1}^2}\sum_{j=1}^{Q}m^+\paren{\beta_j}\triangle(b_j)-1\\
        &=1-2\sum_{i=1}^{P}m^+\paren{\alpha_i}\triangle(a_i)-2\sum_{j=1}^{Q}m^+\paren{\beta_j}\triangle(b_j)+4\sum_{i=1}^{P}\sum_{j=1}^{Q}\brac{m^+\paren{\alpha_i}\triangle(a_i)}\brac{m^+\paren{\beta_j}\triangle(b_j)}\\
        &=\brac{2\sum_{i=1}^{P}m^+\paren{\alpha_i}\triangle(a_i)-1}\brac{2\sum_{j=1}^{Q}m^+\paren{\beta_j}\triangle(b_j)-1}=\tau_{C_{e,\alpha}}\cdot\tau_{C_{e,\beta}}. \qedhere
    \end{align*}
\end{proof}
\begin{proof}[Proof of Theorem \ref{tau-rho-3}]
    To find $\tau_C=1-4\iint\frac{\partial C}{\partial x}\frac{\partial C}{\partial y}dxdy$, from \eqref{PLMS-fac}, in the case that $\alpha_i$ and $\beta_j$ have the same monotonicity,
    \begin{align*}
        \iint_{R_{ij}}\frac{\partial C}{\partial x}\frac{\partial C}{\partial y}dxdy&=a_{i-1}b_j\times\text{Area}\Bigg(\begin{tikzpicture}[baseline=0.6\baselineskip]
        \draw(0,0)--(0.6,0.8)--(0,0.8)--cycle;
        \node[left] at (0,0.4){\footnotesize$\triangle(b_j)$};
        \node[above] at (0.3,0.8){\footnotesize$\triangle(a_i)$};
    \end{tikzpicture}\Bigg)+a_ib_{j-1}\times\text{Area}\Bigg(\begin{tikzpicture}[baseline=0.4\baselineskip]
        \draw(0,0)--(0.6,0)--(0.6,0.8)--cycle;
        \node[right] at (0.6,0.4){\footnotesize$\triangle(b_j)$};
        \node[below] at (0.3,0){\footnotesize$\triangle(a_i)$};
    \end{tikzpicture}\Bigg)\\
    &=\frac{\triangle(a_i)\triangle(b_j)}{2}\left[a_{i-1}b_j+a_ib_{j-1}\right]\\
    &=\frac{\triangle(a_i)\triangle(b_j)}{2}\left[m^-(\alpha_i)\cdot m^+(\beta_j)+m^+(\alpha_i)\cdot m^-(\beta_j)\right].
    \end{align*}
    In the case where $\alpha_i$ and $\beta_j$ have different monotonicities, $\iint_{R_{ij}}\frac{\partial C}{\partial x}\frac{\partial C}{\partial y}dxdy$ also has the same form as the previous case.
    Hence,
    \begin{align*}
        \tau_C&=1-2\sum_{i=1}^{P}\sum_{j=1}^{Q}\triangle(a_i)\triangle(b_j)\left[m^-(\alpha_i)\cdot m^+(\beta_j)+m^+(\alpha_i)\cdot m^-(\beta_j)\right] \tag{$\ast$}\\
        &=1+4\sum_{i=1}^{P}\sum_{j=1}^{Q}\brac{m^+(\alpha_i)\cdot m^+(\beta_j)}\triangle(a_i)\triangle(b_j)\\
        &\ \ \ -2\sum_{i=1}^{P}\sum_{j=1}^{Q}m^+(\beta_j)\triangle(b_j)\paren{a_i^2-a_{i-1}^2}-2\sum_{i=1}^{P}\sum_{j=1}^{Q}m^+(\alpha_i)\triangle(a_i)\paren{b_j^2-b_{j-1}^2}\\
        &=1-2\sum_{i=1}^{P}m^+(\alpha_i)\triangle(a_i)-2\sum_{j=1}^{Q}m^+(\beta_j)\triangle(b_j)+4\sum_{i=1}^{P}\sum_{j=1}^{Q}\brac{m^+(\alpha_i)\triangle(a_i)}\brac{m^+(\beta_j)\triangle(b_j)}\\
        &=\left[2\sum_{i=1}^{P}m^+(\alpha_i)\triangle(a_i)-1\right]\left[2\sum_{j=1}^{Q}m^+(\beta_j)\triangle(b_j)-1\right]=\tau_{C_{e,\alpha}}\cdot\tau_{C_{\beta,e}}.
    \end{align*}
    To find $\rho_C$, consider the case where $\alpha_i$ and $\beta_j$ have the same monotonicity,
    \begin{align*}
        \iint_{R_{ij}}C(x,y)dxdy&=\int_{b_{j-1}}^{b_j}\int_{a_{i-1}}^{\left(\alpha_i^{-1}\circ\beta_j\right)(y)}\left[b_j(x-a_{i-1})+a_{i-1}y\right]dxdy\\
        &\ \ \ +\int_{a_{i-1}}^{a_i}\int_{b_{j-1}}^{\left(\beta_j^{-1}\circ\alpha_i\right)(x)}\left[b_{j-1}x+a_i(y-b_{j-1})\right]dydx\\
        &=\frac{1}{6}\triangle(a_i)\triangle(b_j)\left[a_{i-1}(b_j+b_{j-1})+a_ib_j\right]\\
        &\ \ \ +\frac{1}{6}\triangle(a_i)\triangle(b_j)\left[a_{i-1}b_{j-1}+a_i(b_j+b_{j-1})\right]\\
        &=\frac{1}{6}\triangle(a_i)\triangle(b_j)\left[2(a_i+a_{i-1})(b_j+b_{j-1})-a_{i-1}b_j-a_{i}b_{j-1}\right]\\
        &=\frac{1}{3}\paren{a_i^2-a_{i-1}^2}\paren{b_j^2-b_{j-1}^2}\\
        &\ \ -\frac{1}{6}\triangle(a_i)\triangle(b_j)\left[m^-(\alpha_i)\cdot m^+(\beta_j)+m^+(\alpha_i)\cdot m^-(\beta_j)\right].
    \end{align*}
    In the case where $\alpha_i$ and $\beta_j$ have different monotonicities, $\iint_{R_{ij}}C(x,y)dxdy$ has the same form as the previous case.
    Hence,
    \begin{align*}
        \rho_C
        &=4\sum_{i=1}^P\sum_{j=1}^Q(a_i^2-a_{i-1}^2)(b_j^2-b_{j-1}^2)-3\\
        &\ \ -2\sum_{i=1}^{P}\sum_{j=1}^{Q}\triangle(a_i)\triangle(b_j)\left[m^-(\alpha_i)\cdot m^+(\beta_j)+m^+(\alpha_i)\cdot m^-(\beta_j)\right]\\
        &=1-2\sum_{i=1}^{P}\sum_{j=1}^{Q}\triangle(a_i)\triangle(b_j)\left[m^-(\alpha_i)\cdot m^+(\beta_j)+m^+(\alpha_i)\cdot m^-(\beta_j)\right],
    \end{align*}
    which is the same as $(\ast)$ above, and therefore, $\rho_C=\tau_C=\tau_{C_{e,\alpha}}\tau_{C_{\beta,e}}=\rho_{C_{e,\alpha}}\rho_{C_{\beta,e}}$ where the last equality follows from Theorem \ref{tau-rho-1}.
\end{proof}
\begin{proof}[Proof of Theorem \ref{thm7:18Dec}]
    First of all, we claim that
    \begin{equation}
        1-2\sum_{i=1}^{P}\sum_{j=1}^{Q}(a_{i-1}b_j+a_ib_{j-1})\triangle(a_i)\triangle(b_j)=\sum_{i=1}^{P}\sum_{j=1}^{Q}(\triangle(a_i))^2(\triangle(b_j))^2.\tag{$\ast\ast$}
    \end{equation}
    Notice that the expression on the left-hand side is
    \begin{align*}
        &=1-2\sum_{i=1}^{P}\sum_{j=1}^{Q}\brac{a_ib_j+a_{i-1}b_{j-1}-\triangle(a_i)\triangle(b_j)}\triangle(a_i)\triangle(b_j)\\
        &=1-\sum_{i=1}^{P}\sum_{j=1}^{Q}\brac{2a_ib_j+2a_{i-1}b_{j-1}-\triangle(a_i)\triangle(b_j)}\triangle(a_i)\triangle(b_j)+\sum_{i=1}^{P}\sum_{j=1}^{Q}(\triangle(a_i))^2(\triangle(b_j))^2\\
        &=1-\sum_{i=1}^{P}(a_i^2-a_{i-1}^2)\sum_{j=1}^{Q}(b_j^2-b_{j-1}^2)+\sum_{i=1}^{P}\sum_{j=1}^{Q}(\triangle(a_i))^2(\triangle(b_j))^2\\
        &=\sum_{i=1}^{P}\sum_{j=1}^{Q}(\triangle(a_i))^2(\triangle(b_j))^2.
    \end{align*}
    We have by Lemma \ref{lem6:18Dec} that for any $k\in\left\{1,2,\dots,P\right\}$ and $\ell\in\left\{1,2,\dots,Q\right\}$,
    \begin{align*}
        \iint_{R_{k\ell}}A(u,v)dudv&=\iint_{R_{k\ell}}\left[a_{k-1}v+b_{\ell-1}u-a_{k-1}b_{\ell-1}+\triangle(a_k)\triangle(b_{\ell})(C_k\ast D_{\ell})\left(x(u),y(v)\right)\right]dvdu \tag*{$\bigg(x(u)=\dfrac{u-a_{k-1}}{\triangle(a_k)},\,y(v)=\dfrac{v-b_{\ell-1}}{\triangle(b_{\ell})}\bigg)$}\\
        &=\frac{1}{2}\triangle(a_k)\triangle(b_{\ell})\brac{a_{k-1}(b_{\ell}+b_{\ell-1})+b_{\ell-1}\paren{a_k+a_{k-1}}-2a_{k-1}b_{\ell-1}}\\
        &\ \ \ +(\triangle(a_k))^2(\triangle(b_{\ell}))^2\iint_{\I^2}(C_k\ast D_{\ell})(x,y)dydx\\
        &=\frac{1}{2}\brac{a_{k-1}b_{\ell}+a_kb_{\ell-1}}\triangle(a_k)\triangle(b_{\ell})+(\triangle(a_k))^2(\triangle(b_{\ell}))^2\iint_{\I^2}(C_k\ast D_{\ell})(x,y)dydx.
    \end{align*}
    Since $\displaystyle\rho_A=12\iint_{\I^2}A(u,v)dudv-3$,
    \begin{align*}
        \rho_A&=6\sum_{i=1}^{P}\sum_{j=1}^{Q}\brac{a_{i-1}b_{j}+a_ib_{j-1}}\triangle(a_i)\triangle(b_j)+12\sum_{i=1}^{P}\sum_{j=1}^{Q}(\triangle(a_i))^2(\triangle(b_j))^2\iint_{\I^2}(C_i\ast D_j)(x,y)dxdy-3\\
        &=12\sum_{i=1}^{P}\sum_{j=1}^{Q}(\triangle(a_i))^2(\triangle(b_j))^2\iint_{\I^2}(C_i\ast D_j)(x,y)dxdy-3\sum_{i=1}^{P}\sum_{j=1}^{Q}(\triangle(a_i))^2(\triangle(b_j))^2\tag{by $(\ast\ast)$}\\
        &=\sum_{i=1}^{P}\sum_{j=1}^{Q}(\triangle(a_i))^2(\triangle(b_j))^2\left[12\iint_{\I^2}(C_i\ast D_j)(x,y)dxdy-3\right]\\
        &=\sum_{i=1}^{P}\sum_{j=1}^{Q}(\triangle(a_i))^2(\triangle(b_j))^2\rho_{C_i\ast D_j}.
    \end{align*}
    To calculate $\displaystyle\tau_A=1-4\iint_{\I^2}\frac{\partial A}{\partial u}\frac{\partial A}{\partial v}dudv$, we first derive $\dfrac{\partial A}{\partial u}(u,v)$ and $\dfrac{\partial A}{\partial v}(u,v)$ for any $(u,v)\in R_{k\ell}$.
    \begin{align*}
        \frac{\partial A}{\partial u}(u,v)&=b_{\ell-1}+\triangle(b_{\ell})\partial_1(C_k\ast D_{\ell})\left(\frac{u-a_{k-1}}{\triangle(a_k)},\frac{v-b_{\ell-1}}
        {\triangle(b_{\ell})}\right),\\
        \frac{\partial A}{\partial v}(u,v)&=a_{k-1}+\triangle(a_{k})\partial_2(C_k\ast D_{\ell})\left(\frac{u-a_{k-1}}{\triangle(a_k)},\frac{v-b_{\ell-1}}
        {\triangle(b_{\ell})}\right).
    \end{align*}
    Setting $x(u)=\dfrac{u-a_{k-1}}{\triangle(a_k)}$ and $y(v)=\dfrac{v-b_{\ell-1}}{\triangle(b_{\ell})}$. This implies
    \begin{align*}
        \iint_{R_{k\ell}}\frac{\partial A}{\partial u}\frac{\partial A}{\partial v}dudv&=\iint_{R_{k\ell}}\left[a_{k-1}b_{\ell-1}+b_{\ell-1}\triangle(a_k)\partial_2(C_k\ast D_{\ell})\left(x(u),y(v)\right)\right]dudv\\
        &\ \ \ +\iint_{R_{k\ell}}\left[a_{k-1}\triangle(b_{\ell})\partial_1(C_k\ast D_{\ell})\left(x(u),y(v)\right)\right]dudv\\
        &\ \ \ +\iint_{R_{k\ell}}\left[\triangle(a_k)\triangle(b_{\ell})\partial_1(C_k\ast D_{\ell})\left(x(u),y(v)\right)\partial_2(C_k\ast D_{\ell})\left(x(u),y(v)\right)\right]dudv\\
        &=a_{k-1}b_{\ell-1}\triangle(a_k)\triangle(b_{\ell})+\frac{1}{2}b_{\ell-1}(\triangle(a_k))^2\triangle(b_{\ell})+\frac{1}{2}a_{k-1}\triangle(a_k)(\triangle(b_{\ell}))^2\\
        &\ \ \ +(\triangle(a_k))^2(\triangle(b_{\ell}))^2\iint_{\I^2}\partial_1(C_k\ast D_{\ell})(x,y)\partial_2(C_k\ast D_{\ell})(x,y)dxdy,\\
        \intertext{of which the first three terms can be written as}
        &=\triangle(a_k)\triangle(b_{\ell})\left[a_{k-1}b_{\ell-1}+\frac{1}{2}b_{\ell-1}\triangle(a_k)+\frac{1}{2}a_{k-1}\triangle(b_{\ell})\right]\\
        &=\frac{1}{2}\brac{a_kb_{\ell-1}+a_{k-1}b_{\ell}}\triangle(a_k)\triangle(b_{\ell})
    \end{align*}
    Therefore, by $(\ast\ast)$,
    \begin{align*}
        \tau_A&=1-2\sum_{i=1}^{P}\sum_{j=1}^{Q}\brac{a_ib_{j-1}+a_{i-1}b_j}\triangle(a_i)\triangle(b_j)\\
        &\ \ \ -4\sum_{i=1}^{P}\sum_{j=1}^{Q}(\triangle(a_i))^2(\triangle(b_j))^2\iint_{\I^2}\partial_1(C_i\ast D_j)\partial_2(C_i\ast D_j)dxdy\\
        &=\sum_{i=1}^{P}\sum_{j=1}^{Q}(\triangle(a_i))^2(\triangle(b_j))^2-4\sum_{i=1}^{P}\sum_{j=1}^{Q}(\triangle(a_i))^2(\triangle(b_j))^2\iint_{\I^2}\partial_1(C_i\ast D_j)\partial_2(C_i\ast D_j)dxdy\\
        &=\sum_{i=1}^{P}\sum_{j=1}^{Q}(\triangle(a_i))^2(\triangle(b_j))^2\left[1-4\iint_{\I^2}\partial_1(C_i\ast D_j)\partial_2(C_i\ast D_j)dxdy\right]\\
        &=\sum_{i=1}^{P}\sum_{j=1}^{Q}(\triangle(a_i))^2(\triangle(b_j))^2\tau_{C_i\ast D_j}. \qedhere
    \end{align*}
\end{proof}
\begin{proof}[Proof of Theorem \ref{chat-2}]
    Let $C=C_{\beta,e}\ast C_{\alpha,e}$ and $\alpha,\beta\in\mathscr{T}_{\text{PLMS}}$ with the corresponding partitions $[a_i]_{i=0}^P$ and $[b_j]_{j=0}^Q$, respectively.
    Recall \eqref{alternate-endpoint} that if $k=(i-1)Q+j$, then $m^+\paren{(\beta\circ\alpha)_k}=m^-\paren{\alpha_i}+m^+\paren{\widehat{\beta_j}}\brac{m^+\paren{\alpha_i}-m^-\paren{\alpha_i}}$, and similarly, $m^+\paren{(\beta\circ\alpha)_k}=m^-\paren{\alpha_i}+m^-\paren{\widehat{\beta_j}}\brac{m^+\paren{\alpha_i}-m^-\paren{\alpha_i}}$.
    That is, 
    \begin{align*}
        c_kc_{k-1}&=m^+\paren{(\beta\circ\alpha)_k}m^-\paren{(\beta\circ\alpha)_k}\\
        &=\paren{m^-\paren{\alpha_i}}^2+m^-\paren{\alpha_i}\brac{m^+\paren{\alpha_i}-m^-\paren{\alpha_i}}\brac{\widehat{b_j}+\widehat{b_{j-1}}}+\widehat{b_j}\widehat{b_{j-1}}\brac{m^+\paren{\alpha_i}-m^-\paren{\alpha_i}}^2\\
        &=\paren{m^-\paren{\alpha_i}}^2+m^-\paren{\alpha_i}\brac{m^+\paren{\alpha_i}-m^-\paren{\alpha_i}}\brac{\widehat{b_j}+\widehat{b_{j-1}}}+\widehat{b_j}\widehat{b_{j-1}}\brac{a_i-a_{i-1}}^2.
    \end{align*} 
    Thus, by Theorem \ref{chat-1}, we obtain
    \begin{align*}
        \xi_C&=1-3\sum_{k=1}^{PQ}c_kc_{k-1}\triangle(c_k)\\
        &=1-3\sum_{i=1}^{P}\paren{m^-\paren{\alpha_i}}^2\triangle(a_i)-3\sum_{i=1}^{P}m^-\paren{\alpha_i}\brac{m^+\paren{\alpha_i}-m^-\paren{\alpha_i}}\triangle(a_i)\sum_{j=1}^{Q}\paren{b_j^2-b_{j-1}^2}\\
        &\ \ \ -3\sum_{i=1}^{P}\sum_{j=1}^{Q}\brac{a_i-a_{i-1}}^3b_jb_{j-1}\triangle(b_j) \tag{$k=(i-1)Q+j$}\\
        &=1-3\sum_{i=1}^Pa_ia_{i-1}\triangle(a_i)-3\sum_{i=1}^{P}\paren{a_i^3-a_{i-1}^3}\sum_{j=1}^{Q}b_jb_{j-1}\triangle(b_j)+9\sum_{i=1}^{P}\brac{a_ia_{i-1}\triangle(a_i)}\brac{b_jb_{j-1}\triangle(b_j)}\\
        &=\brac{1-3\sum_{j=1}^{Q}b_jb_{j-1}\triangle(b_j)}\brac{1-3\sum_{i=1}^{P}a_ia_{i-1}\triangle(a_i)}=\xi_{C_{\beta,e}}\cdot\xi_{C_{\alpha,e}}.\qedhere
    \end{align*}
\end{proof}
\subsection{R-code for simulation}
    We can simulate an implicit dependence copula of the form $C_{e,f}\ast_{\mathcal{A}} C_{g,e}$, where $f,g\in\mathscr{T}_{\text{PLMS}}$, $\mathcal{A}=\paren{A_t}_{t\in\I}\subseteq\mathscr{C}$ such that $A_t=C_{\ell}$ for all $t\in\paren{t_{\ell-1},t_{\ell}}$, and $[t_{\ell}]_{\ell=0}^{L}$ is a partition for $[0,1]$, using the following \texttt{R}-code.
    \begin{lstlisting}
#################################################################
# Basic functions
#################################################################
# define f
rdef <- function(x,Par,Signs){
  j <- rank(c(Par,x))[length(Par)+1]-1
  y <- (x-Par[j])/(Par[j+1]-Par[j])
  if (Signs[j]==-1) {y <- 1-y}
  return(y)
}
# define f^{-1}
rdef_inv <- function(z,Par,Signs,pb){
  which_i <- sample.int(length(Par)-1,size=1,replace=TRUE,prob=pb)
  if (Signs[which_i]==1)
  {x <- Par[which_i]+z*(Par[which_i+1]-Par[which_i])}
  else
  {x <- Par[which_i+1]-z*(Par[which_i+1]-Par[which_i])}
  return(x)
}

#################################################################
# Generate IDC
#################################################################
# generate related transformation matrix
cop_tran_matrix <- function(func,Par1,Par2) {
  cop_matrix <- matrix(0,nrow=length(Par1)-1,ncol=length(Par2)-1)
  for (i in (1:length(Par1)-1)) {
    for (j in (1:length(Par2)-1)) {
      cop_matrix[i,j] <- func(Par1[i+1],Par2[j+1])-func(Par1[i],Par2[j+1])-func(Par1[i+1],Par2[j])+func(Par1[i],Par2[j])
    }
  }
  return(cop_matrix)
}
# collect transformation matrices
collect_matrix <- function(Par1,Par2,join_cop) {
  card_cop <- length(join_cop)
  collect_join <- list()
  for (i in 1:card_cop) {
    collect_join <- append(collect_join,list(cop_tran_matrix(join_cop[[i]],Par1,Par2)))
  }
  return(collect_join)
}
# generate copula
simuproduct <- function(n,Par1,Sgn1,Par2,Sgn2,range_sep,join_cop){
  trans_collect <- collect_matrix(Par1,Par2,join_cop)
  C <- matrix(0,nrow=n,ncol=2)
  C[,1] <- runif(n)
  for (i in (1:n))
  {
    j <- rank(c(Par1,C[i,1]))[length(Par1)+1]-1
    temp <- rdef(C[i,1],Par1,Sgn1)
    k <- rank(c(range_sep,temp))[length(range_sep)+1]-1
    tran_matrix <- trans_collect[[k]]
    prob_dist <- tran_matrix[j,]
    C[i,2] <- rdef_inv(temp,Par2,Sgn2,prob_dist)
  }
  return(C)
}

#################################################################
# Basic info for f, g, and joining copulas (random)
#################################################################
max_no_int <- 5
partition_fg <- floor(max_no_int*runif(2))
Pi1 <- sort(runif(partition_fg[1]))
Pi1_help <- c(0,Pi1,1)
Sign1 <- sample(c(-1,1),partition_fg[1]+1,replace=TRUE)
Pi2 <- sort(runif(partition_fg[2]))
Pi2_help <- c(0,Pi2,1)
Sign2 <- sample(c(-1,1),partition_fg[2]+1,replace=TRUE)
no_range <- 3
range_sep <- sort(runif(no_range-1))
range_sep_help <- c(0,range_sep,1)
cop_list <- c(function(u,v) {max(u+v-1,0)},
              function(u,v) {min(u,v)},
              function(u,v) {u*v})
join_cop <- sample(cop_list,no_range,replace=TRUE)

#################################################################
# Find product and plot all
#################################################################
# simulate data
n <- 5000
simdata <- simuproduct(n,Pi1_help,Sign1,Pi2_help,Sign2,range_sep_help,join_cop)
# plot data
par(mfrow=c(1,1),pty="s")
plot(simdata[,1],simdata[,2],col="violet",main="Generalized product of C_{ef} and C_{ge}",pch=16,cex=0.5,xlab="x",ylab="y",asp=1,xlim=c(0,1),ylim=c(0,1))
abline(v=Pi1_help,lty="dashed",col="red")
abline(h=Pi2_help,lty="dashed",col="red")
par(mfrow=c(1,2))
hist(simdata[,1],main="histogram for x",xlab="x")
hist(simdata[,2],main="histogram for y",xlab="y")
\end{lstlisting}
This \texttt{R}-script consists of four parts.
\begin{itemize}
    \item The basic functions \texttt{rdef} and \texttt{rdef\_inv}, where \texttt{rdef} sends $x\mapsto h(x)$ and \texttt{rdef\_inv} sends $h(x)\mapsto x$ (\texttt{rdef\_inv} is actually not a function, so it has a parameter (pb) to randomize an element from the inverse image of $h(x)$) \cite{Mai17}.
    \item The functions to generate an implicit dependence copula: 
    \begin{itemize}
        \item \texttt{cop\_tran\_matrix} generates a transformation matrix from the given joining copula func and partitions of $[0,1]$ on both axes.
        \item \texttt{collect\_matrix} generates a collection of transformation matrices where the joining copulas in \texttt{join\_cop vary}.
        \item \texttt{simuproduct} generates 2-dimensional vectors, where the first coordinate ($x$) is randomized from $\mathcal{U}[0,1]$ and the second one is $g^{-1}(f(x))$, where the functions \texttt{rdef} and \texttt{rdef\_inv} are used to define $f$ and $g^{-1}$, respectively \cite{Mai17}.
        Here, the chosen $y$-value from a given $x$ follows from the value $f(x)$ and the transformation matrix corresponding to the interval covering $f(x)$.
    \end{itemize}
    \item Random part, which determines partitions (Pi) and monotonicity (Sign) of functions $f$ and $g$ (it can be deterministic).
    This part also indicates ranges as a partition of $[0,1]$ (\texttt{range\_sep}) and joining copulas (\texttt{join\_cop}) used to define transformation matrices.
    \item Simulation part, which determines the number of simulations (n) and simulates a matrix of 2-dimensional vectors.
    This matrix is used to plot the support of $C_{e,f}\ast_A C_{g,e}$, as well as the histograms for both coordinates.
\end{itemize}
The next script compares the measures from the simulated implicit dependence copula.
\begin{lstlisting}
# tau & rho for PLMS
tautheory <- function(Pi,Signs)
{
  tau_help <- 0
  Pi_help  <- c(0,Pi,1)
  for (i in (1:(length(Pi)+1)))
  {
    j <- 1
    if (Signs[i]==1)
    {tau_help <- tau_help+Pi_help[i+1]*(Pi_help[i+1]-Pi_help[i])}
    else
    {tau_help <- tau_help+Pi_help[i]*(Pi_help[i+1]-Pi_help[i])}
  }
  tau_help <- 2*tau_help-1
  return(tau_help)
}
# Chatterjee's for PLMS
XiTheory <- function(part) {
  xi_help <- 0
  part_help <- c(0,part,1)
  for (i in (1:(length(part_help)-1))) {
    xi_help <- xi_help+(part_help[i+1]-part_help[i])*part_help[i+1]*part_help[i]
  }
  xi_help <- 1-3*xi_help
  return(xi_help)
}
# calculate from data (theoretical)
tautheory(Pi1,Sign1)
tautheory(Pi2,Sign2)
tautheory(Pi1,Sign1)*tautheory(Pi2,Sign2)
Xi_UV <- XiTheory(Pi2)
Xi_VU <- XiTheory(Pi1)
# calculate from data (actual)
library(pcaPP)
library(XICOR)
cor.fk(simdata)[2,1]
cor(simdata,method = "spearman")[2,1]
calculateXI(simdata[,1],simdata[,2])
calculateXI(simdata[,2],simdata[,1])
\end{lstlisting}
This \texttt{R}-script consists of 3 parts.
\begin{itemize}
    \item Set up the theoretical Kendall's tau (=Spearman's rho) and Chatterjee's correlation for $C_{e,\alpha}$ and $C_{\alpha,e}$ ($\alpha\in\mathscr{T}_{\text{PLMS}}$), respectively, obtained from Theorems~\ref{tau-rho-1} and \ref{chat-1}.
    \item Calculate Kendall's tau for $C_{e,f},C_{g,e},C_{e,f}\ast C_{g,e}$ and Chatterjee's correlations for $C_{e,f}\ast C_{g,e}$ and its transpose from the set up functions in the first part, as well as Theorems~\ref{tau-rho-3} and \ref{chat-3}.
    \item Calculate the empirical Kendall's tau, Spearman's rho, and Chatterjee's coefficients of $C_{e,f}\ast_AC_{g,e}$ from the simulated values.
    Here, the functions \texttt{cor.fk} (from \texttt{pcaPP} package) and \texttt{calculateXI} (from \texttt{XICOR} package) are used to compute $\tau_C$ and $\xi_C$, respectively.
\end{itemize}
\section*{Acknowledgements}
This research project is supported by the Second Century Fund (C2F), Chulalongkorn University.			
The first author also thanks the Chair of Mathematical Finance, Technical University of Munich (TUM), for the opportunity to work as a visiting researcher, including provision of office space and access to research resources during the author's stay at TUM.

\end{document}